\theoremstyle{plain}
\newtheorem{thm}{Theorem}[section]
\newtheorem{lem}[thm]{Lemma}
\theoremstyle{definition}
\theoremstyle{remark}
\newtheorem{rem}{Remark}[section]
\newtheorem{ex}[rem]{Example}
\newcommand{\forme}[1]{}
\begin{document}
\title{Characterization of finite colored spaces with certain conditions}

\author{Mitsugu Hirasaka}
\author{Masashi Shinohara}

\address{Department of Mathematics, College of Sciences, Pusan National University,
63 Beon-gil 2, Busandaehag-ro, Geumjung-gu, Busan 609-735, Korea.}

\address{Department of Education, Faculty of Education, Shiga University, 2-5-1 Hiratsu, Otsu,  SHIGA 520-0862, Japan}
\email{hirasaka@pusan.ac.kr}
\email{shino@edu.shiga-u.ac.jp}

\date{\today}

\thanks{}
\thanks{}
\thanks{}
\subjclass[2010]{05C15, 05C35}

\keywords{colored spaces, isometric sequences, distance sets}
\maketitle


\begin{abstract}
A \textit{colored space} is the pair $(X,r)$
of a set $X$ and a function $r$ whose domain is $\binom{X}{2}$.
Let $(X,r)$ be a finite colored space and $Y,Z\subseteq X$.
We shall write $Y\simeq_r Z$ if there exists a bijection $f:Y\to Z$ such that
$r(U)=r(f(U))$ for each $U\in\binom{Y}{2}$.
We denote the numbers of equivalence classes with respect to $\simeq_r$
contained in $\binom{X}{2}$ and $\binom{X}{3}$ by $a_2(r)$ and $a_3(r)$, respectively.
In this paper we prove that $a_2(r)\leq a_3(r)$ when $5\leq |X|$, and show
what happens when the equality holds.
\end{abstract}

\section{Introduction}
A \textit{colored space} is the pair $(X,r)$ of a set $X$ and a function $r$
whose domain is $\binom{X}{2}$,
where we denote by $\binom{X}{k}$ the set of $k$-subsets of $X$ for a positive integer $k$.
Notice that $r$ can be identified with an edge-coloring of the complete graph of $X$, which
leads us to call the elements of $\mathsf{Im}(r)$ \textit{colors} where
$\mathsf{Im}(r)$ is the image of $r$.
From this point of view simple graphs induce colored spaces with at most two colors.
On the other hand, metric spaces also induce colored space whose colors are defined by
its metric function. Thus, we observe that colore spaces covers quite general objects
of combinatorics and geometry.

Let $(X,r)$ be a colored space.
Then each subset $Y$ of $X$ induces a colored space $(Y,r_Y)$,
called a \textit{subspace}, where $r_Y$ is the restriction
of $r$ to $\binom{Y}{2}$. For $Y,Z\subseteq X$ we say that
$Y$ is \textit{isometric} to $Z$ if there exists a bijection $f:Y\to Z$ with $r_Y=r_Z\circ f$,
and we shall write
\[Y\simeq_r Z\,\,\mbox{if $Y$ is isometric to $Y$}.\]
For a positive integer $k$ we denote the class of isometric subspaces of size $k$
by $A_k(r)$, i.e.,
\[A_k(r)=\left\{[Y]\mid Y\in \binom{X}{k}\right\}\,\, \mbox{where}\,\, [Y]=\{Z\mid Y\simeq_r Z\}.\]
We denote the cardinality of $A_k(r)$ by $a_k(r)$.
One may notice that $A_1(r)$ is a singleton, $A_2(r)$ can be identified with
$\mathsf{Im}(r)$,
and $A_k(r)$ is a finite set if $\mathsf{Im}(r)$  is a finite set.
Thus, the following sequence of positive integers are defined when $|X|=n$:
\[(a_1(r),a_2(r),\ldots, a_{n}(r)),\]
which is called
the \textit{isometric sequence} of $(X,r)$.
As mentioned before, $a_1(r)=1$, $a_2(r)=|\mathsf{Im}(r)|$, and it is easy to obtain
the following:
\begin{ex}\label{ex:2}
For each finite colored space $(X,r)$ with $n=|X|$ we have
\[1\leq a_2(r)\leq \binom{n}{2}.\]
One of the extremal cases has the isometric sequence $(1,1,\ldots, 1)$ and another
has the isometric sequence
\[\left(1,\binom{n}{2},\binom{n}{3},\ldots, \binom{n}{k},\ldots,\binom{n}{n-1},1\right).\]
\end{ex}
Notice that, for $Y,Z\in \binom{X}{3}$,
$Y\simeq_r Z$ if and only if $(r(U)\mid U\in \binom{Y}{2})$ coincides with
$(r(V)\mid V\in \binom{Z}{2})$ as multi-sets, so that we obtain from
the rule of combinations with repetitions that
\[1\leq a_3(r)\leq \binom{a_2(r)+2}{3}.\]
One of the optimal cases appears in four points in a Euclidean space like
the vertices of a rectangle, a tetrahedron with congruent faces or a regular simplex.
On the other hand, another can be attained when you have enough many points, and
one may notice that such colored spaces are conventional and not so curious.
In this paper we focus on how isometric sequences changes.
So far we have seen only isometric sequences changes unimodally, and
it is conjectured to be true for every isometric sequence.
The first step toward this conjecture must be to say what happens when
$a_2(r)>a_3(r)$, that is exactly what is stated in the following our main theorem:
\begin{thm}\label{thm:a3}
For each finite colored space $(X,r)$ with $5\leq |X|$ we have $a_2(r)\leq a_3(r)$.
\end{thm}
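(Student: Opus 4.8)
The plan is to regard $r$ as an edge-coloring of the complete graph on $X$ with exactly $c:=a_2(r)$ colors, and to produce an injection from $\mathsf{Im}(r)$ into $A_3(r)$; since $A_3(r)$ would then be at least as large as $\mathsf{Im}(r)$, the inequality $a_2(r)\le a_3(r)$ follows. The cleanest way to organize such an injection is through Hall's marriage theorem applied to the bipartite incidence between colors and \emph{realized triangle-types}: a triangle-type is the multiset $\{r(U)\mid U\in\binom{Y}{2}\}$ attached to a class $[Y]\in A_3(r)$ (recall the paper's remark that for $Y,Z\in\binom{X}{3}$ isometry is exactly equality of these multisets), and a color $i$ is joined to every realized type in which $i$ occurs. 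A system of distinct representatives for the colors is precisely an injection $\mathsf{Im}(r)\to A_3(r)$, so it suffices to verify Hall's condition: for every set $S$ of colors, the number of realized types meeting $S$ is at least $|S|$. Note each single color lies on an edge, hence (as $|X|\ge 3$) inside some triangle, so $|S|=1$ is automatic.

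To feed Hall's condition I would first dispose of the colors that are easy to represent. Call a color $i$ a \emph{doubling} color if some triangle carries two edges of color $i$, equivalently the subgraph of $i$-colored edges contains two incident edges. For such a color the corresponding type is $\{i,i,j\}$ for some $j$ (possibly with $j=i$), which has a well-defined \emph{majority} color, namely $i$. Assigning to each doubling color the majority color of one of its types gives an injection from the doubling colors into $A_3(r)$, since distinct doubling colors yield types with distinct majorities. The remaining colors are the \emph{matching} colors, those whose color class is a matching; each occurs at most once in any triangle and hence only inside \emph{rainbow} types $\{i,j,k\}$ with $i,j,k$ pairwise distinct, or as the minority of a type $\{j,j,i\}$ with a doubling majority $j$.

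The heart of the argument, and the step where $5\le|X|$ must enter, is the treatment of the matching colors together with the collisions between the rainbow types they need and the majority types already spent. This is exactly the obstruction visible for $|X|=4$: for a non-square rectangle the three colors (two pairs of parallel sides, and the diagonals) are all matching colors, their classes are the three perfect matchings of the complete graph on four points, and every triangle is the single rainbow type ``leg--leg--hypotenuse,'' so $a_3=1<3=a_2$. For $5\le|X|$ no such total decomposition can force all triangles into one rainbow type, and I expect to quantify this by a counting or connectivity argument on the subgraph $G_S$ spanned by the colors of an arbitrary $S$: one shows that the triangles meeting $G_S$ realize at least $|S|$ types, either by producing enough rainbow types directly or by exhibiting a doubling color inside $S$ when the rainbow supply runs short. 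I anticipate the main obstacle to be precisely this verification of Hall's condition for sets $S$ dominated by matching colors, where the rectangle-type degeneracy must be excluded; once the condition holds the theorem is immediate, leaving only the finite base configuration $|X|=5$, which the same dichotomy should settle, aided by the fact that the complete graph on five points, having an odd number of vertices, admits no perfect matching at all and so cannot degenerate as in the $|X|=4$ case.
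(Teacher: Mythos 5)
Your proposal is a strategy outline, not a proof: the step that carries the entire content of the theorem is explicitly deferred. After disposing of the doubling colors (your injection $i\mapsto\{i,i,j\}$ is sound, and is essentially the observation the paper uses in its Lemma~\ref{lem:m2} that $M_2(r)$ embeds into $A_3(r)$), everything hinges on verifying Hall's condition for sets $S$ dominated by matching colors. At that point you write ``I expect to quantify this by a counting or connectivity argument'' and ``I anticipate the main obstacle to be precisely this verification'' --- but that verification \emph{is} the theorem; the hypothesis $5\le|X|$ does no work anywhere else in your argument. Your only concrete remark in that direction (that $K_5$ has no perfect matching, so the rectangle degeneracy cannot recur) rules out one special configuration, not the general failure mode: Hall's condition must hold for \emph{every} subset $S$, including small mixed sets of matching and doubling colors, and nothing you say bounds the number of types met by such an $S$ from below. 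There is also a structural flaw in the two-phase plan: even if you prove Hall's condition for the matching colors among themselves, a partial system of distinct representatives for the doubling colors need not extend, because the minority types $\{j,j,i\}$ a matching color $i$ might need may already be spent as majority representatives; a correct argument needs Hall's condition for the full bipartite incidence, which your decomposition does not deliver.

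For comparison, the paper avoids any representative-selection argument entirely. It proves two fusion lemmas (Lemmas~\ref{lem:a1} and~\ref{lem:a2}): if $a_2(r)\geq 2$ and $n\geq 5$, one can merge two (or, in the matching-only case $m_2(r)=0$, up to three) color classes to get a fusion $r_1$ with $a_2(r)-a_2(r_1)\leq a_3(r)-a_3(r_1)$, by exhibiting explicit pairs of triangles that become isometric only after the merge. Assuming $a_2(r)>a_3(r)$ and iterating, the strict inequality persists until $a_2(r_k)\in\{2,3\}$, contradicting the classification of the small cases in \cite{hs}. That descent sidesteps exactly the local-to-global difficulty your Hall approach runs into: instead of certifying enough distinct triangle types for every color set simultaneously, it only ever needs one new coincidence of triangle types per fusion. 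If you want to salvage your approach, the place to start is a proof, for arbitrary $S$, that the triangles meeting $\bigcup_{i\in S}E_i$ realize at least $|S|$ types --- a statement strictly stronger than the theorem itself, and one whose truth you have not established.
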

Namely, if $a_2(r)>a_3(r)$, then $|X|\leq 4$, and hence the isometric sequence is unimodal.
In Theorem~\ref{thm:a3} it is natural to ask when the equality holds.
For example,
the vertices of a regular octahedron and a regular hexagon in a Euclidean space induce the isometric sequences, respectively,
\[(1,2,2,2,1,1), (1,3,3,3,1,1).\]
In \cite{hs} the authors classify all colored spaces with $a_2(r)=a_3(r)\leq 3$ and $5\leq |X|$
up to isomorphism (see Section~2 for the definition)\footnote{In \cite{hs}
only finite metric spaces are discussed.
However, all results on the classifications of finite metric spaces
can be translated to those on colored spaces, since the classification is done up to isomorphisms.}
Cotinued to these works we solved this problem completely in the following:
\begin{thm}\label{thm:1}
Every finite colored space $(X,r)$ with $4\leq a_2(r)=a_3(r)$ and $9\leq |X|$ induces the partition of $\binom{X}{2}$ given in one of the following:
\begin{enumerate}
\item disjoint matchings and the remaining where disjoint matchings mean matchings whose union is also a matching;

\item a disjoint union of two cliques, disjoint matchings between the two cliques and the remaining;

\item a singleton of one edge $U$, the edges from one end of the edge to $X\setminus U$,
the edges from the other end to $X\setminus U$, and the remaining.
\end{enumerate}
\end{thm}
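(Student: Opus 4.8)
The plan is to regard the coloring $r$ as a partition of $\binom{X}{2}$ into color classes $G_1,\dots,G_c$, each viewed as a graph on $X$, where $c=a_2(r)=a_3(r)\ge 4$ and $n=|X|\ge 9$. Since $n\ge 5$, Theorem~\ref{thm:a3} already gives $a_2(r)\le a_3(r)$, so the hypothesis $a_2(r)=a_3(r)$ places us in the extremal case, and I expect this to force each $G_i$ to be highly structured. I would first sort the colors by the \emph{local} shape of their classes: call a color \emph{thin} if $G_i$ is a partial matching (maximum degree $\le 1$), \emph{broad} if $G_i$ is triangle-free but has a vertex of degree $\ge 2$ (equivalently contains an induced path on three vertices), and \emph{thick} if $G_i$ contains a triangle. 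A color is thin exactly when it occurs in every triangle as a non-repeated entry, broad when some isosceles type $\{i,i,j\}$ with $j\ne i$ occurs but no monochromatic one, and thick when $\{i,i,i\}$ is a realized $3$-type. The whole argument is then organized around the number $s$ of thick colors and the number $t$ of broad colors.

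The heart of the proof is a rigidity lemma asserting that, in the extremal case, $s=1$ and $t\in\{0,1,2\}$, and moreover that the thin colors are pairwise vertex-disjoint, so their union is again a matching. Each assertion is proved by the same mechanism: assuming the contrary produces, inside some triangle, a multiset of colors that is \emph{not} already among the $c$ realized $3$-types, contradicting $a_3(r)=c$. For instance, two thin edges meeting at a vertex, or two distinct thick colors, or three pairwise-crossing broad edges each manufacture a fresh isosceles or rainbow type; and because $n\ge 9$ there are always enough further vertices to realize the auxiliary triangles these arguments require. I would also record here that $s\ge 1$, since $s=t=0$ would make $r$ a proper edge-coloring of $K_n$, which needs at least $n-1\ge 8$ colors and yields many more than $c$ rainbow $3$-types when $n\ge 9$.

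With the lemma in hand, the classification splits along the value of $t$, the unique thick color being written $g$. When $t=0$, every color other than $g$ is thin and, by the lemma, their union is a matching; hence $\overline{G_g}$ is a matching and we are in case~(i). When $t=1$, I would show the broad color forces $G_g$ to be a disjoint union of exactly two cliques: an induced path on three vertices inside $G_g$ would create a type $\{g,g,j\}$ with $j\ne g$ absent from the configuration, so $G_g$ has no induced $P_3$ and is a disjoint union of cliques, while a third clique would produce a triangle with one vertex in each of three components, hence a $3$-type containing no $g$, again impossible; the remaining between-edges then split into the thin matchings and the single broad class, which is case~(ii). When $t=2$, the two broad classes must each be a full star centered at the two endpoints $u,v$ of one edge $U$, with $G_g$ the clique on $X\setminus U$ and $U$ the only thin class; this forces $c=4$ and yields case~(iii).

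The step I expect to be the genuine obstacle is the rigidity lemma, and in particular the bound $t\le 2$ together with the rigid shapes of the broad and thick classes. The difficulty is not any single forbidden configuration but the bookkeeping needed to make the ``fresh type'' arguments exhaustive: one must enumerate how thin, broad, and thick edges can share vertices and, in each case, exhibit a new $3$-type while checking that it does not accidentally coincide with one of the $c$ types already demanded by the rest of the configuration. Controlling these coincidences is exactly where the hypotheses $a_2(r)\ge 4$ and $|X|\ge 9$ enter, the former to exclude the sporadic low-color patterns treated in \cite{hs}, and the latter to guarantee the room needed to realize every auxiliary triangle.
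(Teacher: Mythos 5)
Your plan takes a genuinely different route from the paper --- a direct structural classification via the thin/broad/thick trichotomy, versus the paper's induction on $a_2(r)$ using the fusion lemmas (Lemmas~\ref{lem:a1}, \ref{lem:a2}) together with Theorem~\ref{thm:a3} to keep $a_2=a_3$ while merging colors, with the base case $a_2(r)=4$ settled separately (Theorem~\ref{thm:31}). But as a proof it has a genuine gap, and it is exactly where you yourself locate ``the genuine obstacle'': the rigidity lemma is never proved, and it cannot be deferred, because it \emph{is} the theorem. Its clauses are restatements of the conclusion: pairwise disjointness of the thin colors is literally the ``disjoint matchings'' condition in (i) and (ii); the rigid shapes you assert for $t=1$ and $t=2$ (thick class a union of exactly two cliques; broad classes full stars at the two ends of a single thin edge) are literally cases (ii) and (iii). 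So the proposal assumes what it must prove, and everything before and after the lemma is routine.

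The reason the ``fresh type'' mechanism cannot be closed by local bookkeeping is structural, not clerical. A contradiction with $a_3(r)=c$ requires exhibiting $c+1$ distinct realized $3$-types, and $c$ is unbounded; the only general device is an injection from colors into realized types whose image avoids the fresh type. Such an injection exists for broad and thick colors ($\beta\mapsto\beta\beta x$, exactly the map used in the proof of Lemma~\ref{lem:m2}), but \emph{not} for thin colors: a thin color is witnessed only by types in which it appears as a simple entry, and those witnesses can collide with each other (one rainbow type can absorb three thin colors, as $\delta\gamma\beta$ does in case (iii)) and with the $\beta\beta x$ witnesses (as $\alpha\beta\gamma_i$ does in case (ii)). Concretely, if two thin edges of colors $\alpha_1,\alpha_2$ meet at a vertex, all you get is some realized type $\alpha_1\alpha_2\gamma$; nothing local certifies it is ``new,'' since you do not yet know which $c$ types are realized --- that knowledge is the output of the theorem, not an input. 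The paper pays for this precisely where your plan is silent: the base case needs all of Section~3 (Lemmas~\ref{lem:3c}--\ref{lem:n10}) plus two imported results from \cite{hs} (the classification for $a_2=a_3\le 3$ and its Theorem~3.1 on closed pairs), and for $a_2(r)\ge 5$ the fusion induction makes the classified coarser space dictate the list of realized types before any splitting analysis begins. One small piece of your sketch is rescuable: if every color is a matching, properness at each endpoint of an edge of color $\alpha$ yields at least $(n-2)/2$ distinct rainbow types containing $\alpha$, whence $3a_3(r)\ge a_2(r)(n-2)/2$ and so $a_3(r)>a_2(r)$ for $n\ge 9$; but that disposes only of the case $s=t=0$, the easiest of your claims.
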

\begin{rem}
We have $a_2(r)=a_3(r)=4$ if (iii) happens in Theorem~\ref{thm:1}.
\end{rem}

In relation to this topic we refer some articles on distance sets (see \cite{BBS}, \cite{ES66}, \cite{LRS}
and \cite{Li}). For a positive integer $s$ we say that a finite subset $X$
in a Euclidean space $\mathbb{R}^N$
is an $s$-\textit{distance set} if $a_2(r)=2$ where $r:\binom{X}{2}\to \mathbb{R}_{\geq 0}$ is
is defined by $r(\{x,y\})=||x-y||$. In \cite{BBS}, an upper bound for $|X|$ is
given by a function on $s$ and $N$. In \cite{LRS} and \cite{Li} they show $2$-distance sets in $\mathbb{R}^N$ which attains the upper bound. In \cite{ES66} they show some criterion
to embed finite colored spaces $(X,r)$ with $a_2(r)=2$ into
a Euclidean space of dimension less than $|X|-1$.
In such a way studies on distance sets focus on finite subsets in a Euclidean spaces with
certain optimal conditions. From this point of view Theorem~\ref{thm:1} gives a characterization
of finite subsets $X$ of a Euclidean space whose distances corresponds to
the congruence classes of triangles derived from the elements in $\binom{X}{3}$.

In Section~2 we prepare some notation on colored spaces, and in Section~3 we give a proof of
the first step of the induction to prove our main result in Section~4.

\section{Preliminaries}
Throughout this paper we assume that $(X,r)$ is a finite colored space with $n=|X|$.
For all distinct $x,y\in X$ we write $r(x,y)$ instead of $r(\{x,y\})$ for short.
As mentioned in Section~1, $\mathsf{Im}(r)$ is identified with $A_2(r)$,
so that an element $[\{x,y\}]\in A_2(r)$ is denoted by $r(x,y)$.
For all distinct $x,y,z\in X$ we write $[\{x,y,z\}]\in A_3(r)$ as $\alpha\beta\gamma$
where $\alpha=r(x,y)$, $\beta=r(y,z)$ and $\gamma=r(z,x)$, so that
\[\alpha\beta\gamma=\beta\gamma\alpha=\gamma\alpha\beta=\alpha\gamma\beta=\gamma\beta\alpha=\beta\alpha\gamma.\]

For $\alpha\in A_2(r)$ we define
\[\mbox{$E_\alpha=r^{-1}(\alpha)$ and $R_\alpha=\{(x,y)\in X\times X\mid r(x,y)=\alpha\}$,}\]
so that, for each $\alpha\in A_2(r)$, $(X,E_\alpha)$ is a simple graph
and $R_\alpha$ is a symmetric binary relation on $X$.
For a binary relation $R$ on $X$ and $x\in X$ we set
\[R(x)=\{y\in X\mid (x,y)\in R\}.\]
For a finite colored space $(X_1,r_1)$ we say that $(X,r)$ is \textit{isomorphic} to $(X_1,r_1)$
if there exist bijections $f:X\to X_1$ and $g:\mathsf{Im}(r)\to \mathsf{Im}(r_1)$ with
\[g\circ r=r_1\circ f.\]
Remark that any isometric subspaces are isomorphic, but the converse does not hold in general.

For a positive integer $k$ with $1\leq k\leq n$ we set
\[M_k(r)=\{\alpha\in A_2(r)\mid \mbox{$(X,E_\alpha)$ has a vertex of degree at least $k$}\},\]
and we denote the size of $M_k(r)$ by $m_k(r)$.

\begin{rem}
For each $\alpha\in A_2(r)$,
\[\mbox{ $\alpha\notin M_2(r)$ if and only if
$(X,E_\alpha)$ is a matching on $X$,}\]
\[\mbox{and $\alpha\alpha\alpha\notin A_3(r)$ if and only if $(X,E_\alpha)$ is triangle-free.}\]
\end{rem}

For $\emptyset\ne \Gamma\subseteq A_2(r)$ we say that $\Gamma$ is \textit{closed}
if,
\[\mbox{for all $\alpha,\beta\in \Gamma$ and $\gamma\in A_2(r)$,
$\alpha\beta\gamma\in A_3(r)$ implies $\gamma\in  \Gamma$.}\]

\begin{lem}\label{lem:closed}
For $\emptyset\ne \Gamma\subseteq A_2(r)$, $\Gamma$ is closed if and only if
$(X,\bigcup_{\gamma\in \Gamma}E_\gamma)$ is a disjoint union of cliques.
\end{lem}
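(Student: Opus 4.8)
The plan is to reduce the statement to the standard fact that a simple graph is a disjoint union of cliques precisely when its adjacency relation is transitive, i.e.\ when it contains no induced path on three vertices. Write $E=\bigcup_{\gamma\in\Gamma}E_\gamma$ and let $G=(X,E)$; then the condition ``$\{x,y\}\in E$'' is literally ``$r(x,y)\in\Gamma$''. So I will translate each side of the claimed equivalence into a statement about triples $\{x,y,z\}$ and play these off against the definitions of \emph{closed} and of $A_3(r)$, using the dictionary from Section~2 by which $[\{x,y,z\}]$ is recorded as $\alpha\beta\gamma$ with $\alpha=r(x,y)$, $\beta=r(y,z)$, $\gamma=r(z,x)$.

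For the forward direction, assume $\Gamma$ is closed. To see that $G$ is a disjoint union of cliques it suffices to rule out an induced $P_3$: suppose $\{x,y\},\{y,z\}\in E$ for distinct $x,y,z$, and set $\alpha=r(x,y)$, $\beta=r(y,z)$, $\gamma=r(z,x)$. Then $\alpha,\beta\in\Gamma$ while $\alpha\beta\gamma=[\{x,y,z\}]\in A_3(r)$, so closedness forces $\gamma\in\Gamma$, that is $\{x,z\}\in E$. Hence adjacency in $G$ is transitive, and a symmetric transitive adjacency relation yields exactly a disjoint union of cliques.

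For the converse, assume $G$ is a disjoint union of cliques, and let $\alpha,\beta\in\Gamma$ and $\gamma\in A_2(r)$ satisfy $\alpha\beta\gamma\in A_3(r)$. Choosing distinct $x,y,z$ with $r(x,y)=\alpha$, $r(y,z)=\beta$, $r(z,x)=\gamma$, the hypotheses $\alpha,\beta\in\Gamma$ place $\{x,y\}$ and $\{y,z\}$ in $E$, so $x,y,z$ lie in a common clique of $G$; since that component is complete, $\{x,z\}\in E$, giving $\gamma=r(z,x)\in\Gamma$. Thus $\Gamma$ is closed.

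I do not anticipate a serious obstacle: the whole content is the identification of closedness of $\Gamma$ with transitivity of the relation $r(x,y)\in\Gamma$. The only point requiring a little care is the bookkeeping that a triangle $\{x,y,z\}$ carrying two edge-colors from $\Gamma$ genuinely produces an element $\alpha\beta\gamma\in A_3(r)$ with the stated colors, so that the two definitions line up cleanly on distinct vertices.
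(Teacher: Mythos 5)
Your proof is correct and is essentially the paper's argument: the paper packages the same observation by noting that $\Gamma$ is closed if and only if $\bigcup_{\alpha\in\Gamma\cup\{0\}}R_\alpha$ (the relation together with the diagonal) is an equivalence relation, whose classes are the connected components. Your version simply unpacks that equivalence-relation language into a direct two-way verification of transitivity of the adjacency relation $r(x,y)\in\Gamma$, which is the identical core identification.
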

\begin{proof}
Set $R_0=\{(x,x)\mid x\in X\}$.
Then $\Gamma$ is closed if and only if $\bigcup_{\alpha\in \Gamma\cup\{0\}} R_\alpha$ is an equivalence relation on $X$.
Since the equivalence classes correspond to the connected components of
the graph $(X,\bigcup_{\gamma\in \Gamma}E_\gamma)$, the lemma holds.
\end{proof}

\begin{lem}\label{lem:m0}
If $M_{k+1}(r)=\emptyset$, then $n\leq 1+ka_2(r)$, and the equality does not hold when $k$ is odd and $a_2(r)$ is even.
\end{lem}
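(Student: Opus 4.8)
The plan is to fix an arbitrary vertex and count the edges incident to it, sorted by color. First I would observe that the hypothesis $M_{k+1}(r)=\emptyset$ says precisely that every color class $(X,E_\alpha)$ has maximum degree at most $k$; equivalently, $|R_\alpha(x)|\le k$ for every vertex $x$ and every color $\alpha\in A_2(r)$. Since $x$ is joined to each of the remaining $n-1$ vertices by an edge of exactly one color, summing over the $a_2(r)$ colors gives
\[
n-1=\sum_{\alpha\in A_2(r)}|R_\alpha(x)|\le k\,a_2(r),
\]
which is the desired inequality $n\le 1+k\,a_2(r)$.

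For the equality statement I would analyze what $n=1+k\,a_2(r)$ forces. If this equation holds, then for \emph{every} vertex $x$ the sum above attains its maximum value $k\,a_2(r)$, so each of the $a_2(r)$ summands must equal its upper bound; that is, $|R_\alpha(x)|=k$ for all $x$ and all $\alpha$. Hence every color class $(X,E_\alpha)$ is a $k$-regular graph on $n$ vertices.

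Finally I would invoke the handshake lemma: a $k$-regular graph on $n$ vertices has $kn/2$ edges, so $kn$ must be even. When $k$ is odd and $a_2(r)$ is even, the product $k\,a_2(r)$ is even, so $n=1+k\,a_2(r)$ is odd; then $kn$ is a product of two odd numbers and therefore odd, contradicting the parity requirement. Consequently the equality $n=1+k\,a_2(r)$ is impossible in this case.

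The argument is essentially a single degree count, so I do not expect a serious obstacle. The only points that require care are the faithful translation of $M_{k+1}(r)=\emptyset$ into the uniform bound $|R_\alpha(x)|\le k$, and the observation that the equality $n=1+k\,a_2(r)$—being independent of the chosen vertex—propagates to \emph{all} vertices simultaneously, thereby upgrading an average estimate into exact $k$-regularity of each color class, which is exactly what makes the concluding parity obstruction available.
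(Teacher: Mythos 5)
Your proof is correct and follows essentially the same route as the paper: a single-vertex degree count by color gives the bound, and equality forces each color class $(X,E_\alpha)$ to be $k$-regular, after which the handshake parity of $2|E_\alpha|=nk$ yields the contradiction. Your write-up is a bit more explicit about why regularity propagates to all vertices, but there is no substantive difference.
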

\begin{proof}
Let $x\in X$. Since $M_{k+1}(r)=\emptyset$, we have
\[n=|X|=1+\left|\bigcup_{\alpha\in A_2(r)}R_\alpha(x)\right|\leq 1+ka_2(r).\]
Suppose that the equality holds when $k$ is odd and $a_2$ is even.
Then each vertex has exactly $k$ neighbors in $(X,E_\alpha)$,
which implies that
\[2|E_\alpha|=|R_\alpha|=nk=(1+ka_2(r))k\]
 is odd, a contradiction.
\end{proof}

\begin{lem}\label{lem:m2}
If $a_2(r)=a_3(r)=m_2(r)$, then $a_2(r)\leq 2$.
\end{lem}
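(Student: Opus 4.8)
The plan is to first turn the numerical equality into rigid combinatorial structure, and then run a dichotomy on the colours. Since $m_2(r)=a_2(r)$ forces $M_2(r)=A_2(r)$, every colour $\alpha$ has a vertex $x$ with two $\alpha$-neighbours $y,z\in R_\alpha(x)$, and then $\{x,y,z\}$ has type $\alpha\alpha\beta$ with $\beta=r(y,z)$. Assigning to each $\alpha$ one such type gives a map $A_2(r)\to A_3(r)$ that is injective, because the colour occurring at least twice in a type $\alpha\alpha\beta$ is determined by the type. Hence $A_3(r)$ already contains at least $a_2(r)$ types in which a colour repeats. As $a_3(r)=a_2(r)$, I conclude two facts simultaneously: (a) there is no ``rainbow'' type $\alpha\beta\gamma$ with $\alpha,\beta,\gamma$ pairwise distinct; and (b) for each $\alpha$ there is \emph{exactly} one colour $\sigma(\alpha)$ with $\alpha\alpha\sigma(\alpha)\in A_3(r)$, i.e. $y,z\in R_\alpha(x)$ always forces $r(y,z)=\sigma(\alpha)$.

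I would then split on whether some colour is a fixed point of $\sigma$. If $\sigma(\alpha)=\alpha$, then $\{\alpha\}$ is closed (by (b) the only $\gamma$ with $\alpha\alpha\gamma\in A_3(r)$ is $\alpha$ itself), so by Lemma~\ref{lem:closed} the graph $(X,E_\alpha)$ is a disjoint union of cliques, one of which, say $C$, has size at least $3$. For $w\notin C$ the edges to $C$ avoid $\alpha$, and (a) forces them all to carry one common colour $\delta_w$ with $\sigma(\delta_w)=\alpha$; comparing $\delta_w$ for different external vertices, using (a) and the uniqueness in (b), forces them all equal to a single $\delta$, after which every edge inside $X\setminus C$ is forced to be $\alpha$ as well. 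Thus $X$ is two $\alpha$-cliques joined entirely by $\delta$, so $\mathsf{Im}(r)\subseteq\{\alpha,\delta\}$ and $a_2(r)\le 2$.

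It remains to treat the case $\sigma(\alpha)\ne\alpha$ for all $\alpha$. Then no $(X,E_\alpha)$ contains a triangle, and no vertex can have $\alpha$-degree $\ge 3$ (three $\alpha$-neighbours would be pairwise coloured $\sigma(\alpha)$, making $\sigma(\alpha)$ a fixed point). So every colour graph has maximum degree at most $2$, every triangle of $X$ has type $\alpha\alpha\sigma(\alpha)$ with a unique apex (the common vertex of its two equal edges), and counting triangles by their apex gives $\binom{n}{3}=\sum_{x\in X}f_x$, where $f_x$ is the number of colours of degree $2$ at $x$.

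The crux is the local bound $f_x\le 2$. If three colours had degree $2$ at a vertex $x$, I would examine the edges between the three pairs of $\alpha$-neighbours: by (a) each such edge lies in a two-element colour set, and the maximum-degree-$2$ constraint forces these edges to form a matching, which via (b) makes $\sigma$ interchange each of the three colours with each of the others -- impossible for a function. Granting $f_x\le 2$, I get $\binom{n}{3}\le 2n$, hence $n\le 5$, and a direct inspection of $n\le 5$ finishes the proof (for $n=5$ every $f_x=2$, so each vertex sees exactly two colours forming a $\sigma$-$2$-cycle, necessarily the same pair throughout, giving $a_2(r)=2$). I expect this last, fixed-point-free subcase to be the main obstacle: establishing $f_x\le 2$ from the no-rainbow condition together with the uniqueness of $\sigma$ is the delicate step, after which the triangle count closes the argument.
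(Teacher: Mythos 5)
Your proof is correct, but after the shared opening it takes a genuinely different route from the paper's. Both arguments begin identically: $M_2(r)=A_2(r)$ embeds into $A_3(r)$ via $\alpha\mapsto\alpha\alpha\alpha'$, and the equality $a_2(r)=a_3(r)$ then yields (a) no tricolored triangles and (b) a well-defined map $\sigma$ with $\alpha\alpha\sigma(\alpha)$ the unique type in which $\alpha$ repeats. From there the paper stays local: assuming $a_2(r)\geq 2$ it fixes one bichromatic triangle $\{x_0,x_1,x_2\}$ of type $\alpha\alpha\beta$ and shows, by a short case analysis on an arbitrary further point $y$, that all remaining edges are colored in $\{\alpha,\beta\}$, whence $A_2(r)=\{\alpha,\beta\}$; this is uniform, needs no structural dichotomy, and handles every $n$ at once. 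You instead split on whether $\sigma$ has a fixed point. Your fixed-point case is a pleasant structural argument: via Lemma~\ref{lem:closed} you recover explicitly that $X$ is two $\alpha$-cliques joined completely by a single color $\delta$, which is more information than the paper extracts (it is essentially the configuration underlying Theorem~\ref{thm:1}(ii)). Your fixed-point-free case is a genuinely different, counting-based argument: the degree bound plus apex-counting give $\binom{n}{3}=\sum_{x}f_x\leq 2n$, hence $n\leq 5$. The key step $f_x\leq 2$ is right, though compressed: the four cross edges between two neighbour-pairs carry only the two corresponding colors and split into two perfect matchings, one per color, and a mixed triangle inside that $4$-set forces $\sigma$ to swap the two colors; three pairs would force $\sigma$ to swap three colors pairwise, impossible for a function. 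The price of your route is the residual small-case check: you treat $n=5$ but only gesture at $n\leq 4$. That case does close easily --- for $n=4$ in the fixed-point-free situation every color class must be a two-edge path, so $\sum_x f_x=a_2(r)\leq 3$ while $K_4$ has $4$ triangles, each with a unique apex, a contradiction; and $n\leq 3$ is trivial --- but since the lemma carries no hypothesis on $|X|$, this inspection should be written out rather than asserted.
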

\begin{proof}
It is clear that, for all $\alpha,\beta,\gamma,\delta\in A_2(r)$,
if $\alpha\alpha\beta=\gamma\gamma\delta$, then $\alpha=\gamma$ and $\beta=\delta$.
This implies that $M_2(r)$ is embedded into $A_3(r)$.
Since $A_2(r)=M_2(r)$ by $a_2(r)=m_2(r)$,
it follows from $a_3(r)=m_2(r)$ that the mapping from $M_2(r)$ to $A_3(r)$
defined by
\[\alpha\mapsto \alpha\alpha\alpha'\]
for some $\alpha'\in A_2(r)$ is well-defined and bijective,
so that there are no tricolored triangles.

Suppose $a_2>1$.
Then $\alpha\alpha\beta\in A_3(r)$ for some $\alpha,\beta\in A_2(r)$ with $\alpha\ne \beta$.
Let $x_0,x_1,x_2,y\in X$ with
\[\mbox{$r(x_0,x_1)=r(x_0,x_2)=\alpha$, $r(x_1,x_2)=\beta$ and $y\notin \{x_0,x_1,x_2\}$.}\]
Since $\{x_0,x_1,y\}$ is not tricolored,
we have either
\[\mbox{ $r(x_0,y)=\gamma$ or $r(x_0,y)=\alpha$  where $\gamma:=r(x_1,y)$.}\]
If $r(x_0,y)=\gamma$, then $r(y,x_2)\ne \gamma$ by the injectivity of the above mapping,
and hence $r(y,x_2)=\alpha$, which implies that $\beta=\gamma$ or $\alpha=\gamma$ since
$\{x_1,x_2,y\}$ is not tricolored.
If $r(x_0,y)=\alpha$, then $r(y,x_2)=\gamma=\alpha$ by the injectivity.
Since $y$ is arbitrarily taken, we conclude that $r(x_1,y)\in \{\alpha,\beta\}$.
Since there are no tricolored triangles, it follows that $A_2(r)=\{\alpha,\beta\}$, and hence $a_2(r)=2$.
\end{proof}

\begin{lem}\label{lem:delta}
For all $\alpha,\beta,\gamma,\delta\in A_2(r)$, if $\beta\gamma\delta\in A_3(r)$ is a unique element in which $\delta$ appears,
and $\alpha\notin \{\beta,\gamma,\delta\}$,
then either $\alpha\beta\gamma\in A_3(r)$ or $\beta\beta\alpha,\gamma\gamma\alpha\in A_3(r)$, and furthermore, $\beta,\gamma\in M_2(r)$ unless $n\leq 4$.
\end{lem}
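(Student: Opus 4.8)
The plan is to fix one edge of color $\delta$ and let the uniqueness hypothesis dictate the colors on the rest of $X$. Since $\beta\gamma\delta\in A_3(r)$, there is a pair $\{y,z\}$ with $r(y,z)=\delta$. For any third point $w$, the triangle $\{y,z,w\}$ contains the color $\delta$, so by uniqueness its class is $\beta\gamma\delta$; cancelling one $\delta$ from the multiset equality $\{\delta,r(y,w),r(z,w)\}=\{\beta,\gamma,\delta\}$ gives
\[\{r(y,w),r(z,w)\}=\{\beta,\gamma\}\]
as multisets, for every $w\in X\setminus\{y,z\}$. This single observation drives the whole argument, and it holds regardless of any coincidences among $\beta,\gamma,\delta$. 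When $\beta\ne\gamma$ it splits $X\setminus\{y,z\}$ into $B=\{w\mid r(y,w)=\beta\}$ and $C=\{w\mid r(y,w)=\gamma\}$, with $z$ meeting the two parts in the opposite colors.

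For the first assertion I would use that $\alpha\notin\{\beta,\gamma,\delta\}$ forces every $\alpha$-edge to avoid $y$ and $z$, since each edge incident to $y$ or $z$ carries a color in $\{\beta,\gamma,\delta\}$. Hence an $\alpha$-edge $\{w,w'\}$, which exists because $\alpha\in A_2(r)$, lies inside $B\cup C$. If $w$ and $w'$ lie on the same side, inspecting the two triangles $\{y,w,w'\}$ and $\{z,w,w'\}$ yields $\beta\beta\alpha,\gamma\gamma\alpha\in A_3(r)$; if they lie on opposite sides, the triangle $\{y,w,w'\}$ has type $\alpha\beta\gamma$. This exhausts the cases and gives the stated dichotomy.

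For the ``furthermore'' clause I would just count degrees at $y$ and $z$ in the graphs $(X,E_\beta)$ and $(X,E_\gamma)$. The vertex $y$ is joined to all of $B$ by color $\beta$ and to all of $C$ by color $\gamma$, while $z$ is joined to all of $B$ by $\gamma$ and to all of $C$ by $\beta$. Thus whenever $\max(|B|,|C|)\ge 2$, both $y$ and $z$ witness degree at least $2$ in the appropriate colors, forcing $\beta,\gamma\in M_2(r)$. Since $|B|+|C|=n-2$, the inequality $\max(|B|,|C|)\ge 2$ holds as soon as $n\ge 5$, which is precisely the stated exception $n\le 4$. The degenerate case $\beta=\gamma$ collapses $B\cup C$ into a single $\beta$-neighbourhood of $y$ and is even easier.

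The main obstacle I anticipate is bookkeeping rather than ideas: one must phrase the multiset cancellation so that it covers the degenerate configurations ($\beta=\gamma$, or $\delta\in\{\beta,\gamma\}$) uniformly, and then check that the triangles produced in the case analysis really are the claimed $\alpha\beta\gamma$, $\beta\beta\alpha$ and $\gamma\gamma\alpha$ rather than accidentally collapsed classes. Once the key multiset identity is secured, the remainder is a short finite case check.
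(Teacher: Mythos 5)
Your proposal is correct and follows essentially the same route as the paper: fix a $\delta$-edge, use the uniqueness hypothesis to force $\{r(y,w),r(z,w)\}=\{\beta,\gamma\}$ for every third point $w$ (the paper's equation (1)), note that $\alpha$-edges avoid the endpoints of the $\delta$-edge, split into the same-side/opposite-side cases to get $\beta\beta\alpha,\gamma\gamma\alpha$ or $\alpha\beta\gamma$, and apply pigeonhole on the $n-2\geq 3$ remaining points for the $M_2(r)$ claim. Your explicit $B$/$C$ partition and the remarks on degenerate coincidences among $\beta,\gamma,\delta$ are just a more detailed write-up of the paper's argument, not a different one.
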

\begin{proof}
Let $(x,y)\in R_\delta$ and $z\in X\setminus \{x,y\}$.
Then, by the assumption, $[\{x,y,z\}]=\beta\gamma\delta$, and hence,
\begin{equation}\label{eq:1}
(r(x,z),r(y,z))\in \{(\beta,\gamma),(\gamma,\beta)\}.
\end{equation}
For all distinct $z,w\in X$ with $r(z,w)=\alpha$,
we have $\{z,w\}\cap \{x,y\}=\emptyset$ by the assumption of $\alpha\notin \{\beta,\gamma,\delta\}$.
Thus, $([\{x,z,w\}],[\{y,z,w\}])$ is either
\[\mbox{$(\alpha\beta\gamma,\alpha\beta\gamma)$ or
$(\alpha\beta\beta,\alpha\gamma\gamma)$}.\]
Moreover, if $n\geq 5$, then $\beta,\gamma\in M_2(r)$ by (\ref{eq:1}) and $|X\setminus\{x,y\}|\geq 3$.
\end{proof}

\section{Proof of the first step of the induction}
Throughout this section we assume that $(X,r)$ is a finite colored space with $a_2(r)=a_3(r)=4$.

\begin{lem}\label{lem:3c}
For all distinct $\alpha,\beta,\gamma\in A_2(r)$, if $\{\alpha,\beta,\gamma\}$ is closed,
then
\[\alpha\delta\delta, \beta\delta\delta,\gamma\delta\delta\in A_3(r)\]
 where $\delta$ is
a unique element in $A_2(r)\setminus\{\alpha,\beta,\gamma\}$.
\end{lem}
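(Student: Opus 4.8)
The plan is to translate the hypothesis into the structural language of Lemma~\ref{lem:closed} and then simply read off the three required triangles. Since $\{\alpha,\beta,\gamma\}$ is closed, Lemma~\ref{lem:closed} tells me that the graph $(X,E_\alpha\cup E_\beta\cup E_\gamma)$ is a disjoint union of cliques; let me call its connected components $C_1,\dots,C_t$. By construction every pair inside a single $C_i$ receives a color from $\{\alpha,\beta,\gamma\}$, while a pair lying in two distinct components receives a color outside $\{\alpha,\beta,\gamma\}$. Because we are in the case $a_2(r)=4$, the only color available outside $\{\alpha,\beta,\gamma\}$ is $\delta$. Hence every cross-component pair has color $\delta$, and conversely every $\delta$-edge joins two distinct components.

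Next I would observe that there must be at least two components. Indeed, $\delta\in A_2(r)$ means some pair has color $\delta$, and by the previous paragraph such a pair joins two different $C_i$, forcing $t\ge 2$. With this in hand the three triangles are immediate. Fix $\eps\in\{\alpha,\beta,\gamma\}$ and choose a pair $\{x,y\}$ with $r(x,y)=\eps$. Since $\eps\ne\delta$, the pair $\{x,y\}$ lies inside a single component, say $C_i$; choosing any $z$ in a different component (possible because $t\ge 2$) gives $r(x,z)=r(y,z)=\delta$, so $[\{x,y,z\}]=\eps\delta\delta\in A_3(r)$. Running this for $\eps=\alpha,\beta,\gamma$ in turn yields $\alpha\delta\delta,\beta\delta\delta,\gamma\delta\delta\in A_3(r)$, as required.

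I do not expect a genuine obstacle here: the argument is essentially a direct unpacking of Lemma~\ref{lem:closed}. The only point that needs care is confirming $t\ge 2$, i.e.\ that the cross-component structure is nonempty, and this is exactly what the existence of the fourth color $\delta$ guarantees. It is worth flagging that the standing assumption $a_2(r)=4$ enters only to ensure that $\delta$ is the \emph{unique} color appearing between components; were more colors present, cross-component edges could carry several distinct colors and the clean conclusion would fail.
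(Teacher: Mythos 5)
Your proof is correct and follows essentially the same route as the paper's: invoke Lemma~\ref{lem:closed} to see that $(X,E_\alpha\cup E_\beta\cup E_\gamma)$ is a disjoint union of cliques, note that $E_\delta$ is exactly the set of cross-component pairs (using $a_2(r)=4$), and read off the three triangles. The only difference is that you spell out the details the paper leaves implicit, namely that $\delta\in A_2(r)$ forces at least two components and that each of $\alpha,\beta,\gamma$ actually occurs as an edge inside a component.
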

\begin{proof}
By Lemma~\ref{lem:closed},
$(X,E_\alpha\cup E_\beta\cup E_\gamma)$ is a disjoint union of cliques.
Since $(X,E_\delta)$ is its complement,
each element as in the conclusion can be realized as a triangle in this configuration.
\end{proof}

\begin{lem}\label{lem:aab}
Suppose that $\alpha,\beta\in A_2(r)$ satisfy the following:
\begin{enumerate}
\item $\{\alpha,\beta\}$ is closed and $\alpha\ne \beta$;
\item We have $\alpha\alpha\beta\in A_3(r)$;
\item There exists an element in $A_3(r)\setminus\{\alpha\alpha\beta\}$ which forms
$\alpha_1\alpha_2\alpha_3$ for some $\alpha_1,\alpha_2, \alpha_3\in\{\alpha,\beta\}$.
\end{enumerate}
Let $Y$ be a connected component of $(X,E_\alpha\cup E_\beta)$ with
$x_0,x_1,x_2\in Y$,
\[\mbox{$r(x_0,x_1)=r(x_0,x_2)=\alpha$, $r(x_1,x_2)=\beta$, and $x\in X\setminus Y$.}\]
Then $r(x_1,x)=r(x_2,x)\ne r(x_0,x)\notin M_2(r)$. In particular, $|X\setminus Y|=1$ and
\[A_3(r)=\{\beta\beta\beta, \alpha\alpha\beta,\gamma\gamma\beta,\alpha\gamma\delta\}\]
where $\gamma=r(x_2,x)$ and $\delta=r(x_0,x)$.
\end{lem}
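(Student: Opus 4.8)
The plan is to exploit that $a_3(r)=4$ severely restricts how many triangle types can occur, and to read off the whole structure from a single vertex $x$ outside the clique $Y$. First I would record the ambient picture: by (i) and Lemma~\ref{lem:closed}, $(X,E_\alpha\cup E_\beta)$ is a disjoint union of cliques, so the internal edges of $Y$ carry only $\alpha,\beta$, and consequently every edge from $Y$ to $X\setminus Y$ carries one of the two remaining colors, which I name $\gamma,\delta$. Writing $a=r(x_0,x)$, $b=r(x_1,x)$, $c=r(x_2,x)\in\{\gamma,\delta\}$, the three triangles on $x$ with $x_0,x_1,x_2$ are $\alpha ba$, $\alpha ca$, $\beta bc$. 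Since $\alpha\neq\beta$ and neither equals a cross color, $\alpha ba$ and $\beta bc$ are distinct from each other and from the two purely internal types $\alpha\alpha\beta$ (from (ii)) and the internal type $T$ supplied by (iii); as $a_3(r)=4$, these four types already exhaust $A_3(r)$. Forcing $\alpha ca$ into this list gives $\{a,c\}=\{a,b\}$ as multisets, hence $b=c$, and I rule out the degenerate case $a=b=c$ by noting it would leave the fourth color occurring in no triangle at all, impossible once $n\geq 3$. Setting $\gamma:=b=c$ and $\delta:=a$ yields $r(x_1,x)=r(x_2,x)=\gamma\neq\delta=r(x_0,x)$ and pins down $A_3(r)=\{\alpha\alpha\beta,\,T,\,\alpha\gamma\delta,\,\beta\gamma\gamma\}$.

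Next I would prove $\delta\notin M_2(r)$. The point is that $\delta$ occurs only in the type $\alpha\gamma\delta$, where it occupies exactly one edge; hence no triangle contains two $\delta$-edges. A vertex with two $\delta$-neighbors would force such a triangle, so $(X,E_\delta)$ has maximum degree $1$, i.e. it is a matching and $\delta\notin M_2(r)$.

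The main obstacle is the claim $|X\setminus Y|=1$, which I would settle by a triangle chase against this matching property. Suppose there were a second vertex $x'\in X\setminus Y$. The triangle $\{x_0,x,x'\}$ contains the $\delta$-edge $\{x_0,x\}$, so it is of type $\alpha\gamma\delta$; since $r(x_0,x')$ is a cross color, this forces $r(x_0,x')=\gamma$ and $r(x,x')=\alpha$. Feeding $r(x,x')=\alpha$ into the triangles $\{x_1,x,x'\}$ and $\{x_2,x,x'\}$, each of which now carries both an $\alpha$- and a $\gamma$-edge, forces them to be $\alpha\gamma\delta$ as well (there is no type $\alpha\gamma\gamma$), whence $r(x_1,x')=r(x_2,x')=\delta$. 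But then $x'$ has two distinct $\delta$-neighbors $x_1,x_2$, contradicting that $E_\delta$ is a matching. Thus $x$ is the only vertex outside $Y$.

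Finally, with $X=Y\cup\{x\}$ I would recover the internal geometry of $Y$. Partitioning $Y$ by the color of the edge to $x$, the triangle constraints show that every $\beta$-edge of $Y$ joins two $\gamma$-vertices while every $\alpha$-edge joins the $\gamma$-part to the $\delta$-part; two $\delta$-vertices could be joined neither by $\alpha$ nor by $\beta$, so there is exactly one $\delta$-vertex, namely $x_0$. Hence $x_0$ is joined to $Y\setminus\{x_0\}$ entirely by $\alpha$, and $Y\setminus\{x_0\}$ is a $\beta$-clique, so the only internal triangle types are $\alpha\alpha\beta$ and $\beta\beta\beta$. Consequently $T=\beta\beta\beta$ (this is exactly where (iii) is used, forcing $|Y\setminus\{x_0\}|\geq 3$), and $A_3(r)=\{\beta\beta\beta,\alpha\alpha\beta,\gamma\gamma\beta,\alpha\gamma\delta\}$ as claimed.
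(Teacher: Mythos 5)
Your proof is correct, and its first half runs along the same lines as the paper's: both arguments exploit $a_2(r)=a_3(r)=4$ to pin down $A_3(r)$ as the four distinct types $\alpha\alpha\beta$, $T$, $\beta\gamma\gamma$, $\alpha\gamma\delta$ (your multiset comparison forcing $r(x_1,x)=r(x_2,x)\ne r(x_0,x)$, with the degenerate case $a=b=c$ killed by the observation that all four colors must occur in some triangle, is exactly the paper's claim with its two cases), and both then note that $\delta$ occurs exactly once in exactly one type, so $E_\delta$ is a matching and $\delta\notin M_2(r)$. You diverge in the two remaining steps. For $|X\setminus Y|=1$ the paper just re-applies its first claim to an arbitrary $x'\in X\setminus Y$: either $r(x_0,x')=\delta$, so two outside vertices would put $\delta$ in $M_2(r)$ at $x_0$, or $r(x_0,x')=\gamma$, whence $\gamma\notin M_2(r)$, contradicting that $x$ already has the two $\gamma$-neighbours $x_1,x_2$; you instead run a fresh triangle chase through a hypothetical second outside vertex (forcing $r(x,x')=\alpha$ and then $r(x_1,x')=r(x_2,x')=\delta$ against the matching property), which is equally valid. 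For the identification $T=\beta\beta\beta$ the paper is purely eliminative: $\alpha\alpha\alpha$ or $\alpha\beta\beta$ inside $Y$ would create a fifth type ($\alpha\gamma\gamma$ or $\beta\gamma\delta$), so $T$ can only be $\beta\beta\beta$; you instead derive the complete structure of $Y$ --- a unique $\delta$-vertex $x_0$ joined by $\alpha$-edges to a $\beta$-clique --- from which the internal types are visibly $\alpha\alpha\beta$ and $\beta\beta\beta$. Your route is slightly longer but buys the explicit geometry of $(X,r)$ (apex plus clique plus one outside point), information the paper only recovers later when it uses this lemma; the paper's route is shorter but gives no structural picture.
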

\begin{proof}
We claim that $r(x_i,x)\ne r(x_0,x)$ for $i=1,2$.
Suppose the contrary, i.e., $r(x_i,x)= r(x_0,x)$ for some $i=1,2$.
Without loss of generality we may assume that $r(x_1,x)= r(x_0,x)$.
Notice that
\[\delta\delta\alpha, \delta\gamma\alpha, \delta\gamma\beta\in A_3(r),\]
and they are distinct if $\gamma\ne \delta$, which contradicts $a_3(r)=4$ by (ii) and (iii).
On the other hand, if $\gamma=\delta$, then $\delta\delta\alpha,\delta\delta\beta\in A_3(r)$ are distinct,
a contradiction to $a_2(r)=4$ by (ii) and (iii).

Since $r(x_i,x)\in A_2(r)\setminus\{\alpha,\beta\}$ by (i), it follows from the above claim that
\[\mbox{$r(x_1,x)=r(x_2,x)\ne r(x_0,x)$, and $\gamma\gamma\beta, \gamma\delta\alpha\in A_3(r)$ are distinct.}\]
Therefore, we conclude from (ii) and (iii) that $\delta\notin M_2(r)$.
Since $x$ is arbitrarily taken in $X\setminus Y$, it follows from $\delta\notin M_2(r)$ that
\[|X\setminus Y|=1.\]
This implies that
\[\alpha\alpha\alpha,\alpha\beta\beta\notin A_3(Y),\]
 otherwise,
\[\mbox{$\alpha\gamma\gamma\in A_3(r)$ or
$\beta\gamma\delta\in A_3(r)$,}\]
a contradiction to $a_3(r)=4$.
Since $\gamma\gamma\beta, \gamma\delta\alpha\in A_3(r)$ are distinct,
the last statement follows from (ii) and (iii) with $a_2(r)=a_3(r)=4$.
\end{proof}

\begin{lem}\label{lem:n8}
Suppose $m_3(r)>0$ and
there is no $\alpha\in A_2(r)$ such that $\alpha\alpha\alpha\in A_3(r)$.
Then, for a suitable ordering of elements of $A_2(r)$, $A_3(r)$ equals one of the following:
\begin{enumerate}

\item $\{\alpha\alpha\beta,\alpha\alpha\gamma,\alpha\alpha\delta,\beta\gamma\delta\}$ and $n\leq 8$;
\item $\{\alpha\alpha\beta,\alpha\alpha\gamma, \beta\beta\gamma, \alpha\alpha\delta\}$ and $n\leq 6$;

\item $\{\alpha\alpha\beta,\alpha\alpha\gamma, \beta\beta\gamma, \alpha\beta\delta\}$ and $n\leq 6$.
\end{enumerate}
\end{lem}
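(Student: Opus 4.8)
The plan is to exploit the hypothesis $m_3(r)>0$ to fix a color $\alpha$ and a vertex $v$ whose $\alpha$-neighborhood $N=R_\alpha(v)$ has size at least $3$. Since no color carries a monochromatic triangle, no pair inside $N$ is colored $\alpha$ and no triangle inside $N$ is monochromatic, so $N$ is colored by the three remaining colors and every triangle it carries is bichromatic or trichromatic. Each edge of color $x$ inside $N$ forces $\alpha\alpha x\in A_3(r)$, and each triangle of $N$ contributes its own class; this is what pins down three or four members of $A_3(r)$ simultaneously. I would then split according to whether $N$ carries a trichromatic (rainbow) triangle.

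If some three vertices of $N$ form a rainbow triangle $\beta\gamma\delta$, then together with $v$ they already exhibit the four distinct classes $\alpha\alpha\beta,\alpha\alpha\gamma,\alpha\alpha\delta,\beta\gamma\delta$, so $a_3(r)=4$ forces $A_3(r)$ to be exactly the list in (i). For the bound $n\le 8$ I would first note that each of $\beta,\gamma,\delta$ must be a matching, since a vertex of $\beta$-degree at least $2$ would create a class $\beta\beta x\notin A_3(r)$; hence every vertex has at most three non-$\alpha$ neighbours and $\deg_\alpha\ge n-4$. Next, since every triangle has either $0$ or $2$ edges of color $\alpha$, for every $\alpha$-edge $uw$ each remaining vertex is $\alpha$-adjacent to exactly one of $u,w$, whence $\deg_\alpha(u)+\deg_\alpha(w)=n$; combined with $\deg_\alpha\ge n-4$ this gives $n-4\le\deg_\alpha(u)\le 4$ and therefore $n\le 8$.

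In the remaining case every triangle of $N$ is bichromatic; choosing one of type $\beta\beta\gamma$ (with $\beta$ the repeated color) yields $\alpha\alpha\beta,\alpha\alpha\gamma,\beta\beta\gamma\in A_3(r)$. Because these use only $\{\alpha,\beta,\gamma\}$ while all four colors occur, the fourth class $T$ is the unique member of $A_3(r)$ containing $\delta$. The heart of the argument is to show $T\in\{\alpha\alpha\delta,\alpha\beta\delta\}$. When $T$ is trichromatic I would apply Lemma~\ref{lem:delta} with $\delta$ as the uniquely appearing color: the dichotomy it provides fails for $T=\beta\gamma\delta$ and $T=\alpha\gamma\delta$ but holds exactly for $T=\alpha\beta\delta$, giving (iii). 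When $T=xx\delta$ or $T=\delta\delta x$ I would fix a $\delta$-edge $\{p,q\}$, use the uniqueness of $T$ to force the colors from $p,q$ to every other vertex, and then read off which colors can occur at all; in every case except $T=\alpha\alpha\delta$ this makes two of the four colors vanish, contradicting $a_2(r)=4$, while $T=\alpha\alpha\delta$ survives and is (ii).

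Finally, for the size bounds in (ii) and (iii) I would again localize at a $\delta$-edge $\{p,q\}$ and study $W=X\setminus\{p,q\}$. In (ii) every vertex of $W$ is $\alpha$-adjacent to both $p$ and $q$, so triangles inside $W$ must avoid a second $\alpha$ and still lie in $A_3(r)$; this forces $W$ to be colored by $\{\beta,\gamma\}$ with $\gamma$ a matching and no all-$\beta$ triangle, so the $\beta$-graph is a triangle-free complement of a matching and $|W|\le 4$. In (iii) the edge $\{p,q\}$ splits $W$ according to whether the $\alpha$- or the $\beta$-edge meets $p$, and inside each part all edges are forced to the single color $\gamma$, bounding each part by $2$; either way $n\le 6$. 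I expect the main obstacle to be the bichromatic case: ruling out the six spurious candidates for $T$ cleanly, since each needs its own short realizability argument, and then extracting the sharp bounds $n\le 6$ from the forced local structure.
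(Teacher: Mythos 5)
Your proposal is correct, and while it opens exactly as the paper does (fix $\alpha\in M_3(r)$, a vertex $v$ with $|R_\alpha(v)|\geq 3$, and split according to whether the $\alpha$-neighborhood carries a rainbow triangle or only bichromatic ones), the execution of both halves is genuinely different. To pin down the fourth class in the bichromatic case, the paper first shows $\alpha$ must occur in it via Lemma~\ref{lem:3c} (closedness of $\{\beta,\gamma,\delta\}$ would force $\alpha\alpha\delta$), and then runs a connected-component analysis of $(X,E_\beta\cup E_\gamma)$ to eliminate the remaining candidates and force the form $\alpha\beta\delta$; you instead dispose of the trichromatic candidates with Lemma~\ref{lem:delta} and of the bichromatic candidates by forced colorings at a $\delta$-edge, which checks out case by case and is arguably more uniform. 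More significantly, all of your size bounds are elementary and self-contained: the paper obtains $n\leq 8$ in case (i) and the component bounds behind $n\leq 6$ in cases (ii),(iii) by citing the external result \cite[Thm.~3.1]{hs}, whereas your degree identity $\deg_\alpha(u)+\deg_\alpha(w)=n$ for an $\alpha$-edge $uw$ combined with $\deg_\alpha\geq n-4$ (case (i)), and the forced colorings of $W=X\setminus\{p,q\}$ (cases (ii),(iii)), use nothing beyond the lemma's hypotheses — a real gain in self-containment. Two small points to tighten when writing it up: in case (ii), the claim that $W$ is colored by $\{\beta,\gamma\}$ needs the one-line extra step that a $\delta$-edge inside $W$ would, via the unique class $\alpha\alpha\delta$, force an $\alpha$-edge inside $W$, which is impossible since that would create $\alpha\alpha\alpha$ with $p$; and in the $T=\beta\beta\delta$ subcase only one color ($\alpha$), not two, vanishes — but one suffices to contradict $a_2(r)=4$.
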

\begin{proof}
Let $\alpha\in M_3(r)$ and $\{x,x_1,x_2,x_3\}\in \binom{X}{4}$ such that
\[\mbox{$r(x,x_i)=\alpha$ for $i=1,2,3$.}\]
For short we denote $\{x_1,x_2,x_3\}$ by $U$.
Notice that, by the assumption,
\[\mbox{$\alpha\notin A_2(r_U)$ and $1<|A_2(r_U)|\leq 3$}.\]
If $|A_2(r_U)|=3$, then we obtain the first case
\[A_3(r)=\{\alpha\alpha\beta,\alpha\alpha\gamma,\alpha\alpha\delta,\beta\gamma\delta\}\]
where $A_3(r)\setminus\{\alpha\}=\{\beta,\gamma,\delta\}$. Since $\{\beta,\gamma,\delta\}$ is closed
and $\beta\gamma\delta$ is a unique element consisting of some of $\{\beta,\gamma,\delta\}$,
it follows from \cite[Thm.~3.1]{hs} and $\alpha\alpha\alpha\notin A_3(r)$ that
$(X,E_\beta\cup E_\gamma\cup E_\delta)$ has exactly two connected components of size at most four.
Therefore, $n\leq 8$.

Suppose $A_2(r_U)=\{\beta,\gamma\}$ where $\alpha, \beta,\gamma$ are distinct.
Without loss of generality we may assume that
\[r(x_1,x_2)=r(x_2,x_3)=\beta, r(x_1,x_3)=\gamma.\]
Note that
\[\mbox{$\alpha\alpha\beta,\alpha\alpha\gamma, \beta\beta\gamma\in A_3(r)$ are distinct,}\]
 and
the unique element $\delta \in A_2(r)\setminus\{\alpha,\beta,\gamma\}$ appears in
the unique element in $A_3(r)\setminus\{ \alpha\alpha\beta,\alpha\alpha\gamma, \beta\beta\gamma\}$.
We shall write the unique element in $A_3(r)$ as $\mu\nu \delta$ where $\mu,\nu\in \{\alpha,\beta,\gamma,\delta\}$.

We claim that $\alpha \in \{\mu,\nu\}$.
Otherwise, $\{\beta,\gamma,\delta\}$ is closed, by Lemma~\ref{lem:3c},
\[\alpha\alpha\delta\in A_3(r),\]
a contradiction.

Without loss of generality we may assume that
$\mu=\alpha$ and $\nu\in \{\alpha, \beta,\gamma,\delta\}$.
Then $\{\beta,\gamma\}$ is closed for each case.
Since $\beta\beta\gamma$ is a unique element of $A_3(r)$ consisting of $\beta$ and $\gamma$,
it follows from \cite[Thm.~3.1]{hs} that each connected component of $(X,E_\beta\cup E_\gamma)$ has
size at most four.

If $\nu=\alpha$, then we obtain the second case. Since $\{\beta,\gamma,\delta\}$ is closed, it follows that
$(X,E_\beta\cup E_\gamma\cup E_\delta)$ has exactly two connected components, one of which
contains $\beta\beta\gamma$, another of which consists of two vertices with color $\delta$. Therefore, $n\leq 6$.

From now on we assume that $\nu\ne \alpha$.
We claim $(X,E_\beta\cup E_\gamma)$ has exactly two connected components.
Clearly, it has more than one connected component.
If $y$ is an element of $X$ belonging to neither of the connected components
containing $x$ nor $x_1$, then
\[\mbox{$r(y,x_i)=\alpha$ for some $i=1,2,3$,}\]
 otherwise, $\delta\delta\beta\in A_3(r)$, a contradiction.
Since $\alpha\alpha\alpha\notin A_3(r)$, it follows that
$r(x,y)=\delta$, which implies that $\alpha\alpha\delta\in A_3(r)$, a contradiction to $\nu\ne \alpha$.

Since $\alpha\nu\delta\in A_3(r)$, it follows from the claim that
one edge with color $\delta$ lies between the two connected components and
$\nu\in \{\beta,\gamma\}$.
Notice that $\alpha\beta\delta$ and $\alpha\gamma\delta$ are distinct so that
only one of them belonged to $A_3(r)$.
This implies that each connected component has size at most three and
$|R_\delta(x_2)|=1$. Therefore, $\nu=\beta$ and (iii) holds.

This completes the proof.
\end{proof}

\begin{ex}
Let $\{Y,Z\}$ be a bipartition of $X$ with $|Y|=|Z|=4$.
We set $E_\alpha=(Y,Z)$ where $(Y,Z)$ is defined to be 
the set of edges between $Y$ and $Z$,
and $E_\beta$, $E_\gamma$, $E_\delta$ to be three disjoint perfect matching disjoint from $E_\alpha$.
Then each subset $W$ of $X$ with $|W|\geq 4$ and $|W\cap Y|\ne 2$ satisfies
\[A_3(W)=\{\alpha\alpha\beta,\alpha\alpha\gamma,\alpha\alpha\delta,\beta\gamma\delta\}.\]
\end{ex}

\begin{ex}
Let $\{Y,Z\}$ be a bipartition of $X$ with $|Y|=4$ and $|Z|=2$
We set
\[
\mbox{$E_\alpha=(Y,Z)$, and
$E_\gamma$ to be a perfect matching on $Y$,}\]
\[\mbox{ $E_\beta=\binom{Y}{2}\setminus E_\gamma$,
$E_\delta=\binom{Z}{2}$.}\]
Then each subset $W$ of $X$ with $|W|\geq 5$ and $Z\subseteq W$ satisfies
\[A_3(W)=\{\alpha\alpha\beta,\alpha\alpha\gamma, \beta\beta\gamma, \alpha\alpha\delta\}.\]
\end{ex}

\begin{ex}
Let $\{Y,Z\}$ be a bipartition of $X$ with $|Y|=|Z|=3$ and $(y_0,z_0)\in Y\times Z$.
We set
\[\mbox{$E_\alpha=(Y,Z)\setminus\{y_0z_0\}$, $E_\delta=\{y_0z_0\}$, $
E_\gamma=\binom{Y\setminus\{y_0\}}{2}\cup \binom{Z\setminus\{z_0\}}{2}$}, \]
and $E_\beta$ to be the remaining.
Then each subset $W$ of $X$ with $4\leq |W|$, $\{y_0,z_0\}\subseteq W$ and $|Y\cap W|\ne 2$ satisfies
\[A_3(W)=\{\alpha\alpha\beta,\alpha\alpha\gamma, \beta\beta\gamma, \alpha\beta\delta\}.\]
\end{ex}

\begin{lem}\label{lem:n9}
Suppose that
there exists $\alpha\in A_2(r)$ such that
\[\mbox{$\alpha\alpha\alpha\in A_3(r)$ and $\{\alpha\}$ is closed.}\]
Then, for a suitable ordering of elements of $A_2(r)$, $A_3(r)$ equals one of the following:
\begin{enumerate}
\item $\{\alpha\alpha\alpha, \alpha\beta\gamma,\alpha\gamma\delta,\alpha\beta\delta\}$ and $n\leq 6$;
\item $\{\alpha\alpha\alpha,\alpha\beta\beta, \gamma\gamma\alpha, \beta\gamma\delta\}$;
\item $\{\alpha\alpha\alpha, \alpha\beta\beta,\alpha\beta\gamma,\alpha\beta\delta\}$.
\end{enumerate}
\end{lem}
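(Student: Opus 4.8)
The plan is to exploit the hypothesis through Lemma~\ref{lem:closed}: since $\{\alpha\}$ is closed, $(X,E_\alpha)$ is a disjoint union of $\alpha$-cliques, where $\alpha\alpha\alpha\in A_3(r)$ forces some clique $C$ to have $|C|\geq 3$, while $a_2(r)=4$ forces at least two cliques. The first thing I would record is that \emph{no triangle has exactly two $\alpha$-edges}: two $\alpha$-edges sharing a vertex put all three vertices in one clique. Hence, writing $A_2(r)=\{\alpha,\beta,\gamma,\delta\}$, every element of $A_3(r)$ is $\alpha\alpha\alpha$, or of the form $\alpha\mu\nu$ with $\mu,\nu\in\{\beta,\gamma,\delta\}$ (a ``one-$\alpha$'' triangle), or has no $\alpha$ (a ``zero-$\alpha$'' triangle, all colors in $\{\beta,\gamma,\delta\}$). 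I would split $A_3(r)\setminus\{\alpha\alpha\alpha\}=S_1\sqcup S_0$ into these two types, so $|S_1|+|S_0|=3$. Two easy observations drive everything: a zero-$\alpha$ triangle exists iff three pairwise distinct cliques exist, so $S_0\neq\emptyset$ iff there are at least three $\alpha$-cliques; and $S_1\neq\emptyset$ always, since $C$ together with any outside vertex yields a one-$\alpha$ triangle. In particular $1\leq|S_1|$, hence $|S_0|\leq 2$.

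The principal case distinction I would make is on how an outside vertex meets the large clique. If some $x\notin C$ (with $|C|\geq 3$) joins $C$ in three distinct colors, then the three pairs inside $C$ realize $\alpha\beta\gamma,\alpha\gamma\delta,\alpha\beta\delta$, forcing $S_1=\{\alpha\beta\gamma,\alpha\gamma\delta,\alpha\beta\delta\}$ and hence $S_0=\emptyset$; this is case (i). The bound $n\leq 6$ then follows quickly: $S_0=\emptyset$ means at most two cliques, and since no outside vertex may repeat a color on a clique (that would create a diagonal triangle $\alpha\mu\mu\notin S_1$), each clique has at most three vertices. In the complementary situation every outside vertex meets each large clique in at most two colors, and one checks this always produces a \emph{diagonal} $\alpha\mu\mu\in S_1$.

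From here I would separate according to $|S_0|$. To exclude $|S_0|=2$ (so $|S_1|=1$): the unique one-$\alpha$ triangle must be a diagonal $\alpha\beta\beta$ (a mixed $\alpha\mu\nu$ is impossible against a size-$\geq 3$ clique), which forces every edge leaving a clique of size $\geq 2$ to be colored $\beta$; then $\gamma,\delta$ live only among singleton vertices and a short argument produces either a $\beta\beta\beta$ triangle or a two-colored zero-$\alpha$ triangle, yielding a fifth element of $A_3(r)$, a contradiction. If $|S_0|=0$ (at most two cliques), then $|S_1|=3$ covers $\{\beta,\gamma,\delta\}$ and contains a diagonal; the crucial step is that a ``lonely'' diagonal color $\mu$ (appearing in no mixed triangle of $S_1$ and in no $S_0$-triangle) propagates along edges and forces all cross-edges to be $\mu$, collapsing $a_2(r)$ to $2$. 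Eliminating lonely colors leaves exactly $S_1=\{\alpha\beta\beta,\alpha\beta\gamma,\alpha\beta\delta\}$, that is, case (iii). Finally, if $|S_0|=1$ (at least three cliques) the same monochromatic-meeting phenomenon makes every outside vertex join the large clique in a single color from $\{\beta,\gamma\}$, forcing $S_1=\{\alpha\beta\beta,\alpha\gamma\gamma\}$; locating the $\delta$-edges (whose endpoints must be $\beta$- and $\gamma$-type) then pins the unique zero-$\alpha$ triangle to $T_0=\beta\gamma\delta$, since $T_0=\beta\beta\delta$ or $\gamma\gamma\delta$ would manufacture a forbidden $\beta\beta\gamma$ triangle. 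This is case (ii), where Lemma~\ref{lem:delta} is the natural tool for handling the single-appearance color $\delta$.

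The main obstacle is not any single step but the exhaustive elimination of \emph{near-miss} color patterns, such as $\{\alpha\beta\beta,\alpha\gamma\gamma,\alpha\beta\delta\}$ or $\{\alpha\beta\beta,\alpha\beta\gamma,\beta\gamma\delta\}$, which satisfy $|A_3(r)|=4$ and cover all colors yet are none of (i)--(iii). Each must be ruled out by a tailored propagation argument of the ``lonely color'' type, and the bookkeeping is complicated by the possibility of several large cliques at once and of singleton cliques, which break the symmetry between colors. Keeping these arguments uniform, ideally by isolating a single statement that a color confined to too few triangle-types forces a disjoint-clique (closed) structure via Lemma~\ref{lem:closed}, is where I would concentrate the effort.
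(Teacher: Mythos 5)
Your frame is sound as far as it goes: the reduction via Lemma~\ref{lem:closed} to $\alpha$-cliques, the observation that no triangle has exactly two $\alpha$-edges, the split $A_3(r)\setminus\{\alpha\alpha\alpha\}=S_1\sqcup S_0$, the equivalence of $S_0\neq\emptyset$ with the existence of at least three cliques, and the treatment of case (i) (which matches the paper's) are all correct. The genuine gap is in the elimination arguments for the two remaining branches. In the branch $|S_0|=0$ you claim that ``eliminating lonely colors leaves exactly $S_1=\{\alpha\beta\beta,\alpha\beta\gamma,\alpha\beta\delta\}$.'' That is false: the pattern $S_1=\{\alpha\beta\beta,\alpha\beta\gamma,\alpha\gamma\delta\}$ covers all of $\beta,\gamma,\delta$, contains a diagonal, and has \emph{no} lonely color (the diagonal color $\beta$ occurs in the mixed triangle $\alpha\beta\gamma$), yet it is not case (iii). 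It can be killed --- a $\delta$-colored edge from an outside vertex $x$ to the big clique $C$ forces every other edge from $x$ into $C$ to be $\gamma$, and $|C|\geq 3$ then produces $\alpha\gamma\gamma\notin A_3(r)$ --- but that is a pair-compatibility argument of a different type, which you never supply. Likewise, in the branch $|S_0|=1$ the assertion that every outside vertex meets $C$ monochromatically is unjustified: a priori $S_1$ could be $\{\alpha\beta\beta,\alpha\beta\gamma\}$, with an outside vertex meeting $C$ in colors $\beta,\beta,\gamma$ and with $S_0=\{\beta\gamma\delta\}$ --- a pattern you yourself list as a near-miss but do not dispose of. Your closing paragraph concedes exactly this, so what you have is a correct skeleton plus an acknowledged open middle, not a proof.

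The uniform tool you are missing is the paper's Lemma~\ref{lem:delta}, in particular its second clause: if $\mu\nu\delta$ is the \emph{unique} element of $A_3(r)$ containing $\delta$ and $n\geq 5$, then $\mu,\nu\in M_2(r)$. In the paper's proof one first rules out $\gamma\gamma\delta$ (by the either-or clause of Lemma~\ref{lem:delta}), deduces $\gamma\notin M_2(r)$, and then the $M_2$-clause immediately excludes $\gamma$ from the unique $\delta$-triangle; this one mechanism wipes out all of your near-misses at once (e.g.\ $\beta\gamma\delta$ above is impossible because it would force $\gamma\in M_2(r)$, and $\alpha\gamma\delta$ likewise). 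The paper also pivots the main case distinction differently --- on whether $\{\alpha,\beta\}$ is closed, feeding the closed case to Lemma~\ref{lem:aab} to obtain case (ii) --- rather than on the clique count, and that is what keeps its bookkeeping finite. If you graft both clauses of Lemma~\ref{lem:delta} onto your $|S_0|$-decomposition, your outline can be completed; without such a tool the branches $|S_0|\in\{0,1\}$ remain open.
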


\begin{proof}
Let $Y$ be a connected component of $(X,E_\alpha)$ containing distinct elements $x_1,x_2,x_3$, and let $x\in X\setminus Y$.
Then
\[\alpha\notin \{r(x,x_i)\mid i=1,2,3\}.\]
If $|\{r(x,x_i)\mid i=1,2,3\}|=3$, then we obtain the first case
\[A_3(r)=\{\alpha\alpha\alpha, \alpha\beta\gamma,\alpha\gamma\delta,\alpha\delta\beta\}\]
where $\{r(x,x_i)\mid i=1,2,3\}=\{\beta,\gamma,\delta\}$.
Since $\beta,\gamma,\delta\notin M_2(r)$,
\[|Y|=|R_\beta(x)|+|R_\gamma(x)|+|R_\delta(x)|\leq 3.\]
This implies that $n\leq 6$.

If $|\{r(x,x_i)\mid i=1,2,3\}|<3$, then there exists $\beta\in A_2(r)$ such that $\alpha\beta\beta\in A_3(r)$.

Suppose that $\{\alpha,\beta\}$ is closed.
Applying Lemma~\ref{lem:aab} for $\alpha\beta\beta\in A_3(r)$ we obtain the second case
\[A_3(r)=\{\alpha\alpha\alpha,\alpha\beta\beta,\gamma\gamma\alpha,\beta\gamma\delta\}\]
where $A_2(r)\setminus\{\alpha,\beta\}=\{\gamma,\delta\}$.

Suppose that $\{\alpha,\beta\}$ is not closed.
Note that there are no $\gamma \in A_2(r)$ such that
\[\mbox{$\alpha\alpha \gamma\in A_3(r)$}\]
 since
$\{\alpha\}$ is closed.
Thus there exists $\gamma\in A_2(r)\setminus\{\alpha,\beta\}$ such that one of the following holds:
\begin{enumerate}
\item $\alpha\beta\gamma\in A_3(r)$;
\item $\beta\beta\gamma\in A_3(r)$.
\end{enumerate}

Notice that, for each case, $\{\alpha,\beta,\gamma\}$ is not closed by Lemma~\ref{lem:3c}.
This implies $A_3(r)$ contains $\delta\mu\nu$ such that $\mu,\nu\in \{\alpha,\beta,\gamma\}$
where $\{\mu,\nu\}$ is one of the following:
\[\{\alpha,\beta\},\{\alpha,\gamma\},\{\beta\},\{\beta,\gamma\}, \{\gamma\}.\]
Applying Lemma~\ref{lem:delta} for a unique element $\mu\nu\delta\in A_3(r)$ including $\delta$
we can eliminate the case of $\gamma\gamma\delta \in A_3(r)$, and hence, $\gamma\notin M_2(r)$.
Since $n\geq 5$ unless the first case of this lemma holds,
it follows from Lemma~\ref{lem:delta} that
$\gamma\notin \{\mu,\nu\}$, and the following case can be eliminated:
\begin{enumerate}
\item $\alpha\beta\gamma, \alpha\beta\delta \in A_3(r)$, which is the third case;

\item $\alpha\beta\gamma, \beta\beta\delta \in A_3(r)$ does not occur since $\beta\beta\gamma\notin A_3(r)$;

\item $\beta\beta\gamma, \alpha\beta\delta\in A_3(r)$ does not occur since $\alpha\beta\gamma,\alpha\alpha\gamma\notin A_3(r)$;

\item $\beta\beta\gamma, \beta\beta \delta\in A_3(r)$ does not occur since $\beta\beta\beta\notin A_3(r)$ and $\{\alpha,\delta,\gamma\}$ is closed.
\end{enumerate}
This completes the proof.
\end{proof}

\begin{ex}
Let $\{Y,Z\}$ a bipartition of $X$ with $|Y|=|Z|=3$. We set
\[\mbox{ $E_\alpha=\binom{Y}{2}\cup\binom{Z}{2}$,}\]
$E_\beta$, $E_\gamma$ and $E_\delta$ to be disjoint perfect matchings on $X$ connecting $Y$ and $Z$.
Then each subset $W$ of $X$ with $|W|\geq 4$ and $|Y\cap W|\ne 2$ satisfies
$A_3(W)=\{\alpha\alpha\alpha, \alpha\beta\gamma,\alpha\gamma\delta,\alpha\delta\beta\}$.
\end{ex}

\begin{lem}\label{lem:n10}
Suppose that
there exists $\alpha\in A_2(r)$ such that $\alpha\alpha\alpha\in A_3(r)$ and $\{\alpha\}$ is not closed.
Then, for a suitable ordering of elements of $A_2(r)$,
\[A_3(r)=\{\alpha\alpha\alpha, \alpha\alpha\beta,\alpha\alpha\gamma,\alpha\alpha\delta\}.\]
\end{lem}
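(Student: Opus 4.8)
The plan is to reformulate the conclusion, reduce it by the lemmas already available, and finish with a short case analysis driven by Lemma~\ref{lem:delta}. Write $A_2(r)=\{\alpha,\beta,\gamma,\delta\}$. The asserted identity $A_3(r)=\{\alpha\alpha\alpha,\alpha\alpha\beta,\alpha\alpha\gamma,\alpha\alpha\delta\}$ says exactly that every class in $A_3(r)$ has the shape $\alpha\alpha\eta$, i.e.\ every triangle carries at least two edges of colour $\alpha$, equivalently the non-$\alpha$ edges form a matching on $X$. Since $a_2(r)=a_3(r)=4$, it therefore suffices to exhibit the four \emph{distinct} classes $\alpha\alpha\alpha,\alpha\alpha\beta,\alpha\alpha\gamma,\alpha\alpha\delta$ in $A_3(r)$: being four and distinct, they must exhaust $A_3(r)$. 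The hypothesis that $\{\alpha\}$ is not closed gives, by Lemma~\ref{lem:closed}, a vertex with two $\alpha$-neighbours joined by a non-$\alpha$ edge, whence $\alpha\alpha\beta\in A_3(r)$ for some $\beta\ne\alpha$; together with the hypothesis $\alpha\alpha\alpha\in A_3(r)$ two of the four required classes are already in hand.

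The first reduction is to show that $\{\alpha,\beta\}$ is \emph{not} closed. Indeed, if it were, then $\alpha\ne\beta$, $\alpha\alpha\beta\in A_3(r)$, and the class $\alpha\alpha\alpha\in A_3(r)\setminus\{\alpha\alpha\beta\}$ (built from colours in $\{\alpha,\beta\}$) verify all three hypotheses of Lemma~\ref{lem:aab}; but the conclusion of that lemma describes an $A_3(r)$ not containing $\alpha\alpha\alpha$, a contradiction. Hence, by Lemma~\ref{lem:closed}, $(X,E_\alpha\cup E_\beta)$ is not a disjoint union of cliques, so some triangle uses two colours from $\{\alpha,\beta\}$ and a third colour outside $\{\alpha,\beta\}$, which I name $\gamma$. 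This yields a third class $\tau\in\{\alpha\alpha\gamma,\alpha\beta\gamma,\beta\beta\gamma\}$. The remaining colour $\delta$ must still appear in $A_3(r)$, and since none of $\alpha\alpha\alpha,\alpha\alpha\beta,\tau$ involves $\delta$, the fourth class $T$ is the \emph{unique} class of $A_3(r)$ in which $\delta$ occurs.

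The core is then to upgrade $\tau$ to $\alpha\alpha\gamma$ and to identify $T$ with $\alpha\alpha\delta$, which I would do by feeding the uniqueness of $\delta$ into Lemma~\ref{lem:delta}. Writing $T=\mu\nu\delta$ and applying that lemma to each $\lambda\in\{\alpha,\beta,\gamma\}\setminus\{\mu,\nu\}$, the dichotomy ``$\lambda\mu\nu\in A_3(r)$ or $\mu\mu\lambda,\nu\nu\lambda\in A_3(r)$'' must be reconciled with the four-element list $\{\alpha\alpha\alpha,\alpha\alpha\beta,\tau,T\}$. A finite check over the admissible shapes of the pair $(\tau,T)$, in each case confronting the classes forced by Lemma~\ref{lem:delta} (applied to the unique occurrence of $\delta$, and of $\gamma$ when it too is unique) with that list and with $a_3(r)=4$, shows that every pair other than $(\alpha\alpha\gamma,\alpha\alpha\delta)$ forces either a fifth isometry class or a configuration collapsing to $n\le4$; Lemma~\ref{lem:3c} is used to bar the closed triple $\{\alpha,\beta,\gamma\}$, which would otherwise produce $\alpha\alpha\delta$ prematurely and disrupt the count. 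Once $\tau=\alpha\alpha\gamma$ and $T=\alpha\alpha\delta$ are secured, the four classes are distinct and exhaust $A_3(r)$, which is the claim.

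I expect the bookkeeping of this last step to be the main obstacle: there are several shapes for $T$ and for $\tau$, and closing each cleanly requires choosing the auxiliary colour $\lambda$ in Lemma~\ref{lem:delta} well and invoking its ``$M_2(r)$'' clause (valid for $n\ge5$) together with a degree count of the type in Lemma~\ref{lem:m0} to rule out the small-$n$ escapes. The conceptual content, by contrast, sits entirely in the reformulation and in the two structural reductions, namely Lemma~\ref{lem:aab} to kill closedness of $\{\alpha,\beta\}$ and Lemma~\ref{lem:delta} to exploit the uniqueness of $\delta$.
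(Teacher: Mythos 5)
Your plan coincides, step for step, with the paper's own proof in every part you actually execute: $\alpha\alpha\beta\in A_3(r)$ from the non-closedness of $\{\alpha\}$; closedness of $\{\alpha,\beta\}$ barred by playing Lemma~\ref{lem:aab} against $\alpha\alpha\alpha\in A_3(r)$; a third class $\tau\in\{\alpha\alpha\gamma,\alpha\beta\gamma,\beta\beta\gamma\}$ with a new colour $\gamma$; the fourth class $T$ identified as the unique class containing $\delta$, with Lemma~\ref{lem:3c} barring the closed triple $\{\alpha,\beta,\gamma\}$; and Lemma~\ref{lem:delta} as the finishing tool. All of this is correct and is exactly the published argument.

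The gap is that the decisive step is asserted rather than proved: ``a finite check over the admissible shapes of the pair $(\tau,T)$ \dots shows that every pair other than $(\alpha\alpha\gamma,\alpha\alpha\delta)$ fails'' is, at that point, the entire content of the lemma, and nothing in your text performs the check. For the record, the paper does it in three moves. (a) Lemma~\ref{lem:delta} applied to $T$ kills $T=\gamma\gamma\delta$ and $T=\beta\beta\delta$, since these force $\alpha\gamma\gamma$, respectively $\alpha\beta\beta$, into $A_3(r)$, neither of which is on the four-element list $\{\alpha\alpha\alpha,\alpha\alpha\beta,\tau,T\}$; applied to the unique occurrence of $\gamma$ (or to $T$ when $\gamma$ lies in $T$) it likewise kills $\tau=\beta\beta\gamma$; consequently $\beta,\gamma\notin M_2(r)$. (b) Since $n\geq 5$ holds outright (four points cannot carry four colours together with an $\alpha$-triangle and an $\alpha\alpha\beta$-triangle), the $M_2(r)$ clause of Lemma~\ref{lem:delta} applies to $T=\mu\nu\delta$ and gives $\mu,\nu\in M_2(r)$, hence $\mu=\nu=\alpha$ and $T=\alpha\alpha\delta$. (c) One further application of Lemma~\ref{lem:delta}, with auxiliary colour $\gamma$, forces $\alpha\alpha\gamma\in A_3(r)$, i.e.\ $\tau=\alpha\alpha\gamma$. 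Two auxiliary remarks in your sketch are also off: no degree count in the style of Lemma~\ref{lem:m0} is needed, because $n\geq 5$ is immediate and no ``small-$n$ escape'' ever arises; and a closed $\{\alpha,\beta,\gamma\}$ is fatal not because it would ``produce $\alpha\alpha\delta$ prematurely'' but because Lemma~\ref{lem:3c} would then put the three distinct classes $\alpha\delta\delta,\beta\delta\delta,\gamma\delta\delta$ into $A_3(r)$, contradicting $a_3(r)=4$ (this is also what guarantees that $\delta$ occurs in $T$ only once).
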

\begin{proof}
Since $\{\alpha\}$ is not closed, there exists $\beta\in A_2(r)\setminus\{\alpha\}$ such that
\[\alpha\alpha\beta\in A_3(r).\]

Applying Lemma~\ref{lem:aab} with the assumption of
 \[\alpha\alpha\alpha,\alpha\alpha\beta\in A_3(r)\]
we obtain that $\{\alpha,\beta\}$ is not closed.
Thus, there exists $\gamma\in A_2(r)\setminus\{\alpha,\beta\}$ such that one of the following holds:
\begin{enumerate}
\item $\alpha\alpha\gamma\in A_3(r)$;
\item $\alpha\beta\gamma\in A_3(r)$;
\item  $\beta\beta\gamma\in A_3(r)$.
\end{enumerate}

Notice that, for each case, $\{\alpha,\beta,\gamma\}$ is not closed,
otherwise, we have $a_3(r)>4$, a contradiction.
This implies that $A_3(r)$ contains $\delta\mu\nu$ such that $\mu,\nu\in \{\alpha,\beta,\gamma\}$
where $\{\mu,\nu\}$ is one of the following:
\[\{\alpha\}, \{\alpha,\beta\},\{\alpha,\gamma\},\{\beta\},\{\beta,\gamma\},\{\gamma\}.\]
Applying Lemma~\ref{lem:delta} we can show the following cases never occur:
\[\mbox{$\gamma\gamma\delta\in A_3(r)$, $\beta\beta\delta\in A_3(r)$ or 
$\beta\beta\gamma\in A_3(r)$,}\]
and hence, $\gamma\notin M_2(r)$ and $\beta\notin M_2(r)$.
Since $n\geq 5$ by $a_2(r)=a_3(r)=4$ and the assumption on $\alpha$,
it follows from Lemma~\ref{lem:delta} that
$\gamma\notin \{\mu,\nu\}$, and the following cases can be eliminated:
\begin{enumerate}

\item $\alpha\alpha\gamma,\alpha\alpha\delta\in A_3(r)$, which is the same as in the statement;
\item $\alpha\alpha\gamma,\alpha\beta\delta\in A_3(r)$ does not occur since $\alpha\beta\gamma, \beta\beta\gamma\notin A_3(r)$;

\item $\alpha\beta\gamma, \alpha\alpha\delta \in A_3(r)$ does not occur since $\alpha\alpha\gamma\notin A_3(r)$;

\item $\alpha\beta\gamma, \alpha\beta\delta \in A_3(r)$ does not occur since $\beta\notin M_2(r)$;

\end{enumerate}
This completes the proof.
\end{proof}

\begin{thm}\label{thm:31}
The statement of Theorem ~\ref{thm:1} holds when $a_2(r)=a_3(r)=4$.
\end{thm}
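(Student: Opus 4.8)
The plan is to run an exhaustive case analysis driven by the classification Lemmas~\ref{lem:n8}, \ref{lem:n9} and \ref{lem:n10}, using the hypothesis $9\le n$ (the range of Theorem~\ref{thm:1}) to discard every outcome that carries an explicit upper bound on $n$. First I would split on whether a monochromatic triangle exists. If $\alpha\alpha\alpha\in A_3(r)$ for some $\alpha$, then either some such $\alpha$ has $\{\alpha\}$ closed, where Lemma~\ref{lem:n9} applies, or every such $\alpha$ has $\{\alpha\}$ not closed, where Lemma~\ref{lem:n10} applies. If instead $\alpha\alpha\alpha\notin A_3(r)$ for all $\alpha$, I would split on $m_3(r)$: when $m_3(r)>0$ Lemma~\ref{lem:n8} applies, and the subcase $m_3(r)=0$ is treated below. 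Among these outcomes, all three subcases of Lemma~\ref{lem:n8} force $n\le 8$ and case~(i) of Lemma~\ref{lem:n9} forces $n\le 6$; each violates $9\le n$ and is vacuous. This leaves exactly the three patterns
\[A_3(r)=\{\alpha\alpha\alpha,\alpha\alpha\beta,\alpha\alpha\gamma,\alpha\alpha\delta\},\quad \{\alpha\alpha\alpha,\alpha\beta\beta,\gamma\gamma\alpha,\beta\gamma\delta\},\quad \{\alpha\alpha\alpha,\alpha\beta\beta,\alpha\beta\gamma,\alpha\beta\delta\}\]
coming from Lemma~\ref{lem:n10} and cases~(ii),(iii) of Lemma~\ref{lem:n9}.

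Next I would dispose of the residual subcase $m_3(r)=0$ with no monochromatic triangle. Here $M_3(r)=\emptyset$, so Lemma~\ref{lem:m0} with $k=2$ gives $n\le 1+2a_2(r)=9$; combined with $9\le n$ this forces $n=9$ and equality in Lemma~\ref{lem:m0}. Equality means every vertex has degree exactly $2$ in each graph $(X,E_\alpha)$, so each color class is $2$-regular and hence $M_2(r)=A_2(r)$, that is, $m_2(r)=a_2(r)=4$. Then $a_2(r)=a_3(r)=m_2(r)=4$ contradicts Lemma~\ref{lem:m2}, which bounds $a_2(r)\le 2$. Thus this subcase cannot occur, and the three patterns above are the only survivors.

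Finally, for each surviving pattern I would read off the partition of $\binom{X}{2}$ and match it to Theorem~\ref{thm:1}. For the first pattern, every element of $A_3(r)$ uses at most one edge whose color lies in $\{\beta,\gamma,\delta\}$, so no vertex meets two edges of $E_\beta\cup E_\gamma\cup E_\delta$; hence $\beta,\gamma,\delta$ are disjoint matchings whose union is a matching and $\alpha$ is the remaining color, giving case~(i). For the third pattern every element of $A_3(r)$ contains an $\alpha$, so no triangle lies across three distinct clique components of the disjoint-union-of-cliques $(X,E_\alpha)$ provided by Lemma~\ref{lem:closed}; hence there are exactly two components $Y,Z$, and the pattern forces $\gamma,\delta$ to be disjoint matchings between $Y$ and $Z$ with $\beta$ the remaining between-component color, giving case~(ii). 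For the second pattern $\beta\gamma\delta$ is the unique triangle containing $\delta$, so Lemma~\ref{lem:delta} shows $\delta$ is carried by a single edge $U$; its two endpoints are $\alpha$-isolated, the edges from one end have color $\beta$ and from the other color $\gamma$, and $\alpha$ is the remaining color, giving case~(iii).

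The main obstacle I anticipate is this last translation step rather than the case bookkeeping: turning a list of admissible color-triples in $A_3(r)$ into a precise global description of the graphs $(X,E_\alpha)$ requires controlling the number and sizes of the clique components of $(X,E_\alpha)$ and verifying that each ``remaining'' color occupies exactly the complementary edges, so that no extra triangle type is created. I would lean on Lemmas~\ref{lem:aab}, \ref{lem:3c} and \ref{lem:delta}---already exploited inside the proofs of Lemmas~\ref{lem:n9} and \ref{lem:n10}---to pin down these component structures, and then check directly that the recovered partitions reproduce exactly the prescribed $A_3(r)$.
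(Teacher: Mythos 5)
Your plan follows the paper's proof essentially step for step. The paper also splits into $m_3(r)=0$ (killed exactly as you do: Lemma~\ref{lem:m0} with $k=2$ gives $n\le 9$, hence $n=9$, equality forces every color graph to be $2$-regular, so $m_2(r)=4$, contradicting Lemma~\ref{lem:m2}) and $m_3(r)>0$ (where Lemma~\ref{lem:n8} together with $9\le n$ forces a monochromatic triangle, and Lemmas~\ref{lem:n9} and \ref{lem:n10}, after discarding case~(i) of Lemma~\ref{lem:n9} by its bound $n\le 6$, leave the same three patterns). Your translations of $\{\alpha\alpha\alpha,\alpha\alpha\beta,\alpha\alpha\gamma,\alpha\alpha\delta\}$ to Theorem~\ref{thm:1}(i) and of $\{\alpha\alpha\alpha,\alpha\beta\beta,\alpha\beta\gamma,\alpha\beta\delta\}$ to Theorem~\ref{thm:1}(ii) are also the paper's arguments (indeed, your reason for $E_\gamma\cup E_\delta$ being a matching is the correct one, where the paper's text has a slip).

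The one step that fails as written is the remaining pattern $\{\alpha\alpha\alpha,\alpha\beta\beta,\alpha\gamma\gamma,\beta\gamma\delta\}$: you assert that Lemma~\ref{lem:delta} ``shows $\delta$ is carried by a single edge,'' but Lemma~\ref{lem:delta} concludes nothing of the sort --- its conclusion is only that $\alpha\beta\gamma\in A_3(r)$ or $\beta\beta\alpha,\gamma\gamma\alpha\in A_3(r)$, plus $\beta,\gamma\in M_2(r)$. Nor does the fact that $E_\delta$ is a matching suffice: four points carrying two disjoint $\delta$-edges joined by a $\beta$-perfect matching and a $\gamma$-perfect matching realize only the type $\beta\gamma\delta$, so nothing local rules out $|E_\delta|\ge 2$. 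The paper closes this by a different argument, which you would need here: fix a $\delta$-edge $\{z_1,z_2\}$ and an $\alpha$-component $Y$ with $|Y|\ge 3$ (it exists since $\alpha\alpha\alpha\in A_3(r)$, and $\{\alpha\}$ is closed so $Y$ is a clique by Lemma~\ref{lem:closed}); then $\{z_1,z_2\}\cap Y=\emptyset$ because no element of $A_3(r)$ contains both $\alpha$ and $\delta$, each $r(y,z_i)$ lies in $\{\beta,\gamma\}$, the map $y\mapsto r(y,z_i)$ is constant on $Y$ because $\alpha\beta\gamma\notin A_3(r)$, and $\beta\beta\delta\notin A_3(r)$ forces the two constants to be $\beta$ and $\gamma$ respectively. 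Only after this does one exclude any vertex outside $Y\cup\{z_1,z_2\}$ (its two colors to $z_1,z_2$ would be $\{\beta,\gamma\}$, and either orientation creates an $\alpha$-edge into $Y$ or a second $\delta$-edge at that vertex) and read off the partition of Theorem~\ref{thm:1}(iii). You flagged exactly this translation as the anticipated obstacle, but the tool you proposed for it is the wrong one; replace the appeal to Lemma~\ref{lem:delta} by this component-constancy argument.
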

\begin{proof}

Suppose $m_3(r)=0$. 
Then $n=9$ since $a_2(r)=4$ and $9\leq n$.
This implies $m_2(r)=4$, which contradicts Lemma~\ref{lem:m2}.

Suppose $m_3(r)>0$.
Applying Lemma~\ref{lem:n8} with $9\leq n$ we conclude that
there exists $\alpha\in A_2(r)$ such that $\alpha\alpha\alpha\in A_3(r)$.
By Lemma~\ref{lem:n9} and \ref{lem:n10}, $A_3(r)$ is one of the following:
 \begin{enumerate}
\item $\{\alpha\alpha\alpha, \alpha\beta\beta,\alpha\gamma\gamma,\beta\gamma\delta\}$;
\item $\{\alpha\alpha\alpha, \alpha\beta\beta,\alpha\beta\gamma,\alpha\beta\delta\}$;
\item $\{\alpha\alpha\alpha, \alpha\alpha\beta,\alpha\alpha\gamma,\alpha\alpha\delta\}$.
\end{enumerate}

(i) Let $z_1,z_2\in X$ with $r(z_1,z_2)=\delta$ and $Y$ a connected component of $(X,E_\alpha)$ with size at least three.
Since no element of $A_3(r)$ contains both $\alpha$ and $\delta$, we have
$\{z_1,z_2\}\cap Y=\emptyset$.
Since $\delta\notin M_2(r)$,
\[\mbox{$r(y,z_i)\in\{\beta,\gamma\}$ for each $y\in Y$.}\]
Since $\alpha\beta\gamma\notin A_3(r)$,
\[\mbox{$r(y,z_i)$ is constant whenever $y\in Y$.}\]
Since $\beta$ and $\gamma$ are symmetric, we may assume that
$r(y,z_1)=\beta$ for each $y\in Y$.
Since $\beta\beta\delta\notin A_3(r)$, it follows that
$r(y,z_2)=\gamma$ for each $y\in Y$. Thus, the partition induces the one given in (iii) of Theorem~\ref{thm:1}.

(ii) Since each element of $A_3(r)$ contains $\alpha$, it follows from Lemma~\ref{lem:closed} that
$(X,E_\alpha)$ has exactly two connected components, say $Y$ and $Z$.
Note that $E_\gamma$ and $E_\delta$ are matchings on $X$ and $E_\beta\cup E_\gamma$ is also a matching on $X$ since
no element of $A_3(r)$ contains both $\beta$ and $\gamma$. Therefore, $(X,r)$ is the one given in (ii) of Theorem~\ref{thm:1}.

(iii) Note that that $E_\beta$, $E_\gamma$ and $E_\delta$ are matchings on $X$ and
$E_\beta\cup E_\gamma\cup E_\delta$ is also a matching on $X$ since
no element of $A_3(r)$ contains two of $\beta$, $\gamma$ and $\delta$.
Therefore, $(X,r)$ coincides with the one given in  (i) of Theorem~\ref{thm:1}.

\end{proof}

\section{Proof of our main results}
For functions $r,r_1$ whose domain is $\binom{X}{2}$ we say that
$r_1$ is a \textit{fusion} of $r$ if $\{r^{-1}(\alpha)\mid \alpha\in \mathsf{Im}(r)\}$ is a refinement of
$\{r_1^{-1}(\alpha)\mid \alpha\in \mathsf{Im}(r_1)\}$.
\begin{lem}\label{lem:a1}
Let $(X,r)$ be a finite colored space with
\[\mbox{$2\leq a_2(r)$,  $0<m_2(r)$ and $5\leq n$.}\]
Then there exists a fusion $r_1$ of $r$ such that
\[\mbox{ $a_2(r)-a_2(r_1)=1$ and $1\leq a_3(r)-a_3(r_1)$.}\]
\end{lem}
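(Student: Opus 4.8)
The plan is to obtain the required fusion by merging one carefully chosen pair of colors, so first I record the elementary mechanism that governs the whole argument. If $c_1,c_2\in A_2(r)$ are distinct and $r_1$ is the fusion that recolors every edge of color $c_1$ or $c_2$ by a single new color while leaving all other colors untouched, then $a_2(r_1)=a_2(r)-1$, and the type $\alpha\beta\gamma$ of a $3$-subset is sent to the type obtained by replacing each occurrence of $c_1,c_2$ by the new color. Since a bijection preserving $r$ also preserves $r_1$, this induces a \emph{surjection} $A_3(r)\to A_3(r_1)$, whence $a_3(r_1)\le a_3(r)$, with strict inequality exactly when two distinct types of $A_3(r)$ coincide after the replacement. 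Thus it suffices to exhibit distinct colors $c_1,c_2$ together with two distinct types in $A_3(r)$ that agree once $c_1,c_2$ are merged.

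By $0<m_2(r)$ there are $x,y_1,y_2\in X$ with $r(x,y_1)=r(x,y_2)=\alpha$ for some $\alpha\in A_2(r)$. The generic case is that some $z\in X\setminus\{x,y_1,y_2\}$ satisfies $c_1:=r(y_1,z)\ne r(y_2,z)=:c_2$; note $z\ne x$ automatically, since $r(y_1,x)=r(y_2,x)=\alpha$. Then $\{x,y_1,z\}$ and $\{x,y_2,z\}$ have types $\alpha\,r(x,z)\,c_1$ and $\alpha\,r(x,z)\,c_2$, which are distinct (because $c_1\ne c_2$) but become equal after merging $c_1,c_2$. Merging $c_1,c_2$ therefore yields the desired $r_1$, and here only $n\ge 4$ is used.

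The remaining case is that $y_1,y_2$ are \emph{twins}: $r(y_1,w)=r(y_2,w)$ for every $w\notin\{y_1,y_2\}$. Put $\beta=r(y_1,y_2)$; then each $w\notin\{y_1,y_2\}$ gives a type $\beta c c\in A_3(r)$ with $c=r(y_1,w)$, and $\alpha$ occurs among these colors. If two distinct colors $c\ne c'$ occur this way, the distinct types $\beta cc$ and $\beta c'c'$ collapse after merging $c,c'$ (should one equal $\beta$, merge $\beta$ with the other instead), and we are done. Otherwise every edge at $y_1$ besides $y_1y_2$ has color $\alpha$. If $\beta=\alpha$, all edges at $y_1,y_2$ are $\alpha$, so by $2\le a_2(r)$ some edge of $X'=X\setminus\{y_1,y_2\}$ has a color $\delta\ne\alpha$; then $\alpha\alpha\alpha$ and $\alpha\alpha\delta$ both lie in $A_3(r)$ and collapse after merging $\alpha,\delta$. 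If $\beta\ne\alpha$, then $\alpha\alpha\beta\in A_3(r)$, and each edge of $X'$ of color $c'$ gives $\alpha\alpha c'\in A_3(r)$, where $|X'|=n-2\ge 3$ guarantees $X'$ has a triangle; if $X'$ carries a color $c'\ne\beta$ we again obtain two collapsing types, and in the sole remaining configuration $X'$ is a $\beta$-clique with $A_3(r)=\{\alpha\alpha\beta,\beta\beta\beta\}$, whose two types collapse after merging $\alpha,\beta$. I expect this twin case to be the main obstacle—in particular the verification of the rigid end-configuration $A_3(r)=\{\alpha\alpha\beta,\beta\beta\beta\}$, where one must invoke $5\le n$ to force a triangle inside $X'$ and $2\le a_2(r)$ to pin $A_3(r)$ down to exactly two collapsing types—whereas the generic case is immediate.
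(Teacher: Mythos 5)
Your proposal is correct and follows essentially the same strategy as the paper's proof: merge exactly two colors chosen so that two explicitly exhibited triangles (sharing a vertex of degree $\geq 2$ in some color class, or built from the twin pair $y_1,y_2$) become isometric after the fusion, proceeding by the same case analysis on whether outside points distinguish the two neighbors and ending in the same rigid configuration where $5\leq n$ and $2\leq a_2(r)$ are invoked. The only difference is organizational (your explicit twins dichotomy versus the paper's sequential elimination over a $4$-set and then a fifth point), not mathematical.
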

\begin{proof}
Let $\{x_1,x_2,x_3,x_4\}\in \binom{X}{4}$ with $r(x_1,x_2)=r(x_2,x_3)$.

If $r(x_4,x_1)\ne r(x_4,x_3)$, then the identification of  $r(x_4,x_1)$ and $r(x_4,x_3)$ is a required one since
\[\mbox{$[\{x_1,x_2,x_4\}]= [\{x_3,x_2,x_4\}]$ in $(X,r_1)$ but not in $(X,r)$.}\]
If $r(x_4,x_1)= r(x_4,x_3)\ne r(x_1,x_2)$, then the identification of  $r(x_1,x_2)$ and $r(x_1,x_4)$ is a required one since
\[\mbox{$[\{x_1,x_2,x_3\}]= [\{x_1,x_4,x_3\}]$ in $(X,r_1)$ but not in $(X,r)$.}\]
If $r(x_4,x_1)= r(x_4,x_3)= r(x_1,x_2)$ and $r(x_1,x_3)\ne r(x_2,x_4)$, then
the identification of  $r(x_1,x_3)$ and $r(x_2,x_4)$ is a required one since
\[\mbox{$[\{x_1,x_2,x_3\}]= [\{x_2,x_1,x_4\}]$ in $(X,r_1)$ but not in $(X,r)$.}\]
Since $x_4$ is arbitrarily taken, we may assume that, for every $x_5\in X\setminus \{x_i\mid i=1,2,3,4\}$, we have $r(x_5,x_1)= r(x_5,x_3)= r(x_1,x_2)$ and
\[r(x_2,x_4)=r(x_1,x_3)=r(x_2,x_5)=r(x_4,x_5).\]
If $r(x_1,x_2)\ne r(x_2,x_4)$, then the identification of  $r(x_1,x_2)$ and $r(x_2,x_4)$ is a required one since
\[\mbox{$[\{x_1,x_2,x_3\}]= [\{x_2,x_4,x_5\}]$ in $(X,r_1)$ but not in $(X,r)$.}\]
If $r(x_1,x_2)=r(x_2,x_4)$, then $a_2(r)=1$, a contradiction.
\end{proof}

\begin{lem}\label{lem:a2}
Let $(X,r)$ be a finite colored space with
\[\mbox{ $a_2(r)<\binom{n}{2}$, $m_2(r)=0$ and $5\leq n$.}\]
Then there exists a fusion $r_1$ of $r$ such that
\[\mbox{$a_2(r)-a_2(r_1)\leq 2$, and $a_2(r)-a_2(r_1)\leq a_3(r)-a_3(r_1)$.}\]
\end{lem}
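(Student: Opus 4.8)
The plan is to produce the required fusion by one structural computation together with a dichotomy on the hypergraph $\mathcal T\subseteq\binom{A_2(r)}{3}$ defined as follows. Since $m_2(r)=0$, every graph $(X,E_\gamma)$ is a matching, so no triangle repeats a colour and each element of $A_3(r)$ is tricolored; hence $A_3(r)$ is identified with the set $\mathcal T$ of colour-triples that occur in $(X,r)$. The computation I will lean on is: if $r_1$ fuses two colours $\sigma\ne\tau$ into one, then $a_2(r)-a_2(r_1)=1$, while $a_3(r)-a_3(r_1)$ equals the number of pairs $\{\mu,\nu\}\subseteq A_2(r)\setminus\{\sigma,\tau\}$ with $\{\sigma,\mu,\nu\},\{\tau,\mu,\nu\}\in\mathcal T$. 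Indeed, the triples meeting $\{\sigma,\tau\}$ in exactly one colour are precisely the ones that can be identified after fusion, and—because $m_2(r)=0$ forbids two $\sigma$-edges or two $\tau$-edges in a triangle—the triples containing both $\sigma$ and $\tau$ become pairwise distinct types $\sigma'\sigma'\gamma$ and contribute nothing to the drop.

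If some colour-pair $\{\mu,\nu\}$ lies in at least two triples, say $\{\sigma,\mu,\nu\},\{\tau,\mu,\nu\}\in\mathcal T$ with $\sigma\ne\tau$, then fusing $\sigma$ and $\tau$ already gives $a_2(r)-a_2(r_1)=1\le a_3(r)-a_3(r_1)$, which is the required inequality. So I may assume the \emph{linear} case: every colour-pair lies in at most one triple of $\mathcal T$.

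In the linear case I invoke the two remaining hypotheses. As $a_2(r)<\binom n2$ and every colour is a matching, some colour $\alpha$ has two disjoint edges $\{x_1,x_2\},\{x_3,x_4\}$. Put $a=r(x_1,x_3)$, $b=r(x_1,x_4)$, $c=r(x_2,x_3)$, $d=r(x_2,x_4)$. The triples $\{x_1,x_2,x_3\}$ and $\{x_1,x_3,x_4\}$ both contain the colour-pair $\{\alpha,a\}$, so linearity forces $b=c$; comparing $\{x_1,x_2,x_3\}$ and $\{x_2,x_3,x_4\}$ through $\{\alpha,c\}$ forces $a=d$. Thus the four points form a \emph{rectangle}: the three perfect matchings of their $K_4$ are monochromatic in three distinct colours $\alpha$, $p:=a$, $q:=b$. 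Now use $n\ge5$: choose $x_5$ and set $e_i=r(x_5,x_i)$; the matching condition makes $e_1,e_2,e_3,e_4$ pairwise distinct and none of them equal to $\alpha,p,q$. I then fuse the two disjoint colour-pairs $\{e_1,e_4\}$ and $\{e_2,e_3\}$ simultaneously, which lowers $a_2$ by exactly $2$. Under this fusion the triples $\{x_5,x_1,x_2\}=\{\alpha,e_1,e_2\}$ and $\{x_5,x_3,x_4\}=\{\alpha,e_3,e_4\}$ acquire a common type, and so do $\{x_5,x_1,x_3\}=\{p,e_1,e_3\}$ and $\{x_5,x_2,x_4\}=\{p,e_2,e_4\}$; these are two collapses of distinct triples into two distinct new types, whence $a_3(r)-a_3(r_1)\ge2=a_2(r)-a_2(r_1)$.

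The crux is exactly this linear case. The fusion formula shows that while $\mathcal T$ stays linear no single fusion changes $a_3$; moreover, merging three colours at once in a linear hypergraph also leaves $a_3$ unchanged, because any two triples identified by such a merge must agree off the merged colours and therefore share a colour-pair, which linearity forbids. Hence a two-pair fusion harvesting two transverse identifications is essentially forced, and the delicate steps are establishing the rectangle rigidity and checking that fusing two of the cross-colours through $x_5$ genuinely collapses two distinct pairs of triples rather than one.
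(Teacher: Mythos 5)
Your proof is correct, and in the hard case it uses exactly the paper's construction, but the way you organize the case analysis is genuinely different. The paper fixes two disjoint $\alpha$-edges $x_1x_2,y_1y_2$ and splits locally according to $|\{r(x_i,y_j)\mid i,j=1,2\}|\in\{4,3,2\}$: in the first two cases it fuses the two colors in which a suitable pair of triangles on those four points differ, and in the last (rectangle) case it brings in a fifth point $z$ and fuses the two disjoint pairs among $r(z,x_1),r(z,x_2),r(z,y_1),r(z,y_2)$. Your dichotomy is global instead: either the $3$-uniform hypergraph $\mathcal{T}$ of color triples is non-linear, in which case a single fusion of the two odd colors works wherever the non-linearity sits (the paper's first two cases are special instances of this), or $\mathcal{T}$ is linear, in which case you show that linearity forces any two disjoint monochromatic edges to span a rectangle and then run the same fifth-point double fusion. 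The two arguments can diverge on a given space (a rectangle configuration with non-linearity elsewhere is treated by the paper's third case but by your Case A), and your route buys two things the paper does not make explicit: an exact formula for the drop of $a_3$ under a single fusion (the paper only needs that merging two distinct isometry classes drops $a_3$ by at least one), and the rigidity statement that linearity forces the rectangle. The verifications your construction needs --- that $e_1,\dots,e_4$ are pairwise distinct and avoid $\alpha,p,q$, and that the four triples $\{\alpha,e_1,e_2\},\{\alpha,e_3,e_4\},\{p,e_1,e_3\},\{p,e_2,e_4\}$ are pairwise distinct yet collapse to exactly two distinct types after the double fusion --- are all present or immediate from the matching hypothesis $m_2(r)=0$, so I see no gap; your closing paragraph about three-color merges is motivational only and carries no logical weight in the proof.
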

\begin{proof}
The condition $m_2(r)=0$ implies that, for each $\alpha\in \mathsf{Im}(r)$,
$(X,E_\alpha)$ is a matching.
The condition $a_2(r)<\binom{n}{2}$ implies that
there exists $\alpha\in\mathsf{Im}(r)$ such that $|r^{-1}(\alpha)|\geq 2$.
Let
\[\mbox{$x_1x_2, y_1y_2\in r^{-1}(\alpha)$ with  $x_1x_2\cap y_1y_2=\emptyset$,}\]
so that, for $i=1,2$,
\[\mbox{$r(x_i,y_1)\ne r(x_i,y_2)$ and $r(x_1,y_i)\ne r(x_2,y_i)$}\]
where we write $\{u,v\}\in \binom{X}{2}$ as $uv$ for short.

If $|\{r(x_i,y_j)\mid i,j=1,2\}|=4$, then the identification of  $r(x_1,y_1)$ and $r(x_2,y_2)$ is a required one since
\[\mbox{$[\{x_1,x_2,y_1\}]= [\{y_1,y_2,x_2\}]$ in $(X,r_1)$ but not in $(X,r)$.}\]

If $|\{r(x_i,y_j)\mid i,j=1,2\}|=3$ so that we may assume $r(x_1,y_1)=r(x_2,y_2)$,
then the identification of  $r(x_1,y_2)$ and $r(x_2,y_1)$ is a required one since
\[\mbox{$[\{x_1,x_2,y_1\}]= [\{x_1,x_2,y_2\}]$ in $(X,r_1)$ but not in $(X,r)$.}\]

If $|\{r(x_i,y_j)\mid i,j=1,2\}|=2$ so that we may assume
\[\mbox{$r(x_1,y_1)=r(x_2,y_2)$ and $r(x_1,y_2)=r(x_2,y_1)$, }\]
then we take $z\in X\setminus \{x_1,x_2,y_1,y_2\}$, so that
\[\mbox{$|\{r(z,x_i),r(z,y_i)\mid i=1,2\}|=4$ and}\]
\[\mbox{ $\{r(z,x_i),r(z,y_i)\mid i=1,2\}\cap \{r(x_1,x_2), r(x_1,y_1),r(x_1,y_2)\}=\emptyset$.}\]
Furthermore, the fusion $r_1$ of $r$ obtained by identifying $r(z,x_1)$ with $r(z,y_2)$, and  $r(z,x_2)$ with $r(z,y_1)$, is a required one,
since
\[\mbox{$[\{x_1,x_2,z\}]= [\{y_1,y_2,z\}]$ and $[\{x_1,y_1,z\}]= [\{x_2,y_2,z\}]$ in $(X,r_1)$}\]
 but not in $(X,r)$.

Since $|\{r(x_i,y_j)\mid i,j=1,2\}|>1$, this completes the proof.
\end{proof}

\begin{thm}\label{thm:a3}
For each finite colored space $(X,r)$ with $5\leq n$ we have $a_2(r)\leq a_3(r)$.
\end{thm}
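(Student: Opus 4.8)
The plan is to argue by induction on $a_2(r)$, with the two fusion lemmas doing essentially all of the work. The mechanism is that a fusion $r_1$ of $r$ lives on the same ground set $X$ (so $n\ge 5$ is preserved) and merely coarsens the coloring, so that both $a_2(r_1)\le a_2(r)$ and $a_3(r_1)\le a_3(r)$; hence whenever a fusion with $a_2(r_1)<a_2(r)$ is available, the inductive hypothesis $a_2(r_1)\le a_3(r_1)$ applies to it. The base case $a_2(r)=1$ is immediate: a single color makes every triangle monochromatic, so $a_3(r)=1$, and since $5\le n$ forces $\binom{X}{3}\ne\emptyset$ we obtain $a_2(r)=1\le a_3(r)$.

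For the inductive step I would assume $a_2(r)\ge 2$ and split cases according to $m_2(r)$. If $m_2(r)>0$, Lemma~\ref{lem:a1} produces a fusion $r_1$ with $a_2(r_1)=a_2(r)-1$ and $a_3(r_1)\le a_3(r)-1$, whence $a_2(r)=a_2(r_1)+1\le a_3(r_1)+1\le a_3(r)$. If $m_2(r)=0$ but $a_2(r)<\binom{n}{2}$, Lemma~\ref{lem:a2} produces a fusion $r_1$ with $d:=a_2(r)-a_2(r_1)$ satisfying $1\le d\le 2$ (the constructed fusion genuinely identifies distinct colors, so $d\ge 1$) and $d\le a_3(r)-a_3(r_1)$; then $a_2(r)=a_2(r_1)+d\le a_3(r_1)+d\le a_3(r)$. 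In both situations the induction closes, since $a_2(r_1)<a_2(r)$.

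The one configuration left outside the reach of the fusion lemmas is $m_2(r)=0$ together with $a_2(r)=\binom{n}{2}$, i.e.\ $r$ injective on edges. Here two triangles are isometric precisely when they use the same three edges, so $a_3(r)=\binom{n}{3}$, and the claim reduces to the elementary inequality $\binom{n}{2}\le\binom{n}{3}$, which holds exactly for $n\ge 5$. This is the only place where the hypothesis $5\le n$ is truly consumed, and it is sharp: for $n=4$ one has $\binom{4}{2}=6>4=\binom{4}{3}$, matching the expected failure of the theorem below $5$. I expect no real obstacle beyond keeping the case split clean and confirming that Lemma~\ref{lem:a2} always returns $d\ge 1$; everything else is bookkeeping on the two inequalities that the lemmas hand us.
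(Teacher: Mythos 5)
Your proof is correct, and it runs on the same engine as the paper's: Lemma~\ref{lem:a1} and Lemma~\ref{lem:a2} supply a fusion dropping $a_2$ by $d\in\{1,2\}$ while dropping $a_3$ by at least $d$, and the color-injective case $a_2(r)=\binom{n}{2}$ is settled by $a_3(r)=\binom{n}{3}\geq\binom{n}{2}$ for $n\geq 5$. The difference is in how the descent terminates. The paper argues by contradiction: assuming $a_2(r)>a_3(r)$, it propagates the strict inequality through successive fusions until $a_2(r_k)\in\{2,3\}$, and then invokes the external result \cite[Thm.~3.5]{hs} to obtain the contradiction. You instead run a direct induction on $a_2$ whose base case $a_2=1$ is trivial (every triangle is monochromatic, so $a_3=1$), so your argument is self-contained: there is no special obstruction at $a_2\in\{2,3\}$, the same two lemmas close every inductive step, and the dependence on the companion paper disappears. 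That is a genuine gain in economy. One point you were right to flag rather than assume: Lemma~\ref{lem:a2} as stated only asserts $a_2(r)-a_2(r_1)\leq 2$, not $\geq 1$, so the positivity $d\geq 1$ must be read off from its proof (each identification made there merges colors that are genuinely distinct, because under $m_2(r)=0$ every color class is a matching and the relevant edges share vertices). The paper's own proof silently relies on the same fact --- without $d\geq 1$ its descent need not make progress --- so your explicit check repairs a small unstated step common to both arguments rather than introducing a gap.
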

\begin{proof}
Suppose the contrary, i.e., $a_2(r)> a_3(r)$.
If $a_2(r)=\binom{n}{2}$, then
\[\mbox{$a_3(r)=\binom{n}{3}$, so that
$a_2(r)\leq a_3(r)$ unless $n\leq 4$.}\]
Applying Lemma~\ref{lem:a1} and \ref{lem:a2} for $(X,r)$ we obtain a fusion $r_1$ of $r$
such that
\[\mbox{$a_2(r_1)= a_2(r)-1>a_3(r)-1\geq a_3(r_1)$ if $a_2(r)-a_2(r_1)=1$}, \] and
\[\mbox{$a_2(r_1)=a_2(r)-2>a_3(r)-2\geq a_3(r_1)$ if $a_2(r)-a_2(r_1)=2$}\]
Repeating this argument we have, for some positive integer $k$,
\[2=a_2(r_k)>a_3(r_k) \mbox{ or }  3=2_3(r_k)>a_3(r_k),\]
which contradicts \cite[Thm.~3.5]{hs}.
\end{proof}
\begin{flushleft}
Proof of Theorem~\ref{thm:1}.
\end{flushleft}
\begin{proof}
Use induction on $a_2(r)$.
Theorem~\ref{thm:31} proves the first step of the induction.
Suppose $5\leq a_2(r)$.

We claim that $m_2(r)>0$.
Suppose the contrary, i.e., $(X,E_\alpha)$ is a matching for each $\alpha\in A_2(r)$.
Then
\[\binom{n}{2}=\sum_{\alpha\in \mathsf{Im}(r)}|r^{-1}(\alpha)|\leq a_2(r)\frac{n}{2},\]
and hence, $8\leq n-1\leq a_2(r)$.
Therefore, by Lemma~\ref{lem:a1} and \ref{lem:a2} and Theorem~\ref{thm:a3}, there exists a fusion $r_1$ of $r$ such that
\[6\leq a_2(r_1)=a_3(r_1)<a_2(r).\]
By the inductive hypothesis, $(X,r_1)$ induces a partition given in Theorem~\ref{thm:1}.
However, it is impossible to obtain $(X,r)$ with $m_2(r)=0$ by separating one or two elements of $A_2(r_1)$ into two parts, a contradiction.

By the claim, we conclude from Lemma~\ref{lem:a1} that
$(X,r)$ is obtained by separating only one of $r_1^{-1}(\alpha)$ with
 $\alpha\in \mathsf{Im}(r_1)$ into two parts.

First, we claim that $(X,r_1)$ does not induce a partition given in Theorem~\ref{thm:1}(iii).
Suppose the contrary, i.e., $(X,r_1)$ induces a partition given in Theorem~\ref{thm:1}(iii).
For convenience we assume that
\[\mbox{$\mathsf{Im}(r_1)=\{\alpha,\beta,\gamma,\delta\}$, $r_1^{-1}(\delta)=\{y,z\}$, }\]
\[\mbox{$r_1^{-1}(\gamma)$ is the edges from $y$ to $U$ where $U:=X\setminus\{y,z\}$,}\]
\[\mbox{
$r_1^{-1}(\beta)$ is the edges from $z$ to $U$ and
$r_1^{-1}(\alpha)=\binom{U}{2}$.}\]
Notice that $r_1^{-1}(\delta)$ is not separated since it is only one edge.
If $r_1^{-1}(\gamma)$ is separated into two parts, say $r^{-1}(\gamma_1)$  and $r^{-1}(\gamma_2)$ then
\[a_3(r)-a_3(r_1)\geq 2\]
 since
\[\mbox{$\gamma_1\gamma_2\alpha$, $\gamma_1\beta\delta$
 and $\gamma_2\beta\delta$}\]
  are non-isometric in $(X,r)$.
This implies
\begin{equation}\label{eq:b1}
a_3(r)\geq a_3(r_1)+2=6>5=1+a_2(r_1)=a_2(r),
\end{equation}
a contradiction to $a_2(r)=a_3(r)$.
By symmetry of $\beta$ and $\gamma$, we can show that $r_1^{-1}(\beta)$ is not separated.
If $r_1^{-1}(\alpha)$ is separated into two parts, say $r^{-1}(\alpha_1)$  and $r^{-1}(\alpha_2)$, then
$a_3(r)-a_3(r_1)\geq 2$ since
$\alpha_1\beta\gamma$ and $\alpha_2\beta\gamma$ are not isometric in $(X,r)$,
and $a_3(r_U)\geq 2$ since $|U|\geq 5$ and $a_2(r_U)=2$.
Thus, we have the same contradiction as (\ref{eq:b1}).

Second, we claim that, if  $(X,r_1)$ induces a partition given in Theorem~\ref{thm:1}(ii),
then $(X,r_1)$ induces a partition given in  Theorem~\ref{thm:1}(ii).
For convenience we assume that
\[\mathsf{Im}(r_1)=\{\alpha,\beta,\gamma_i\mid i=1,2,\ldots, a_2(r)-3\},\]
\[\mbox{$r_1^{-1}(\alpha)=\binom{Y}{2}\cup \binom{Z}{2}$,
$\bigcup_{i=1}^{a_2(r)-3}r^{-1}(\gamma_i)$ is a matching, }\]
\[\mbox{and
$r_1^{-1}(\beta)$ is the remaining edges.}\]
If $r_1^{-1}(\gamma_i)$ is separated, then $(X,r)$ induces a partition
 given in Theorem~\ref{thm:1}(ii).
If $r_1^{-1}(\beta)$ is separated into two parts, say $r^{-1}(\beta_1)$ and $r^{-1}(\beta_2)$,
then one of them should be a matching disjoint from
\[\bigcup_{i=1}^{a_2(r)-3}r^{-1}(\gamma_i),\]
otherwise, the following non-isometric elements in $(X,r)$ would induce a contradiction:
\[\mbox{$\alpha\beta_1\beta_1$, $\alpha\beta_2\beta_2$,
 and $\alpha\beta_1\beta_2$ or}\]
\[\mbox{$\alpha\beta_i\beta_i$, $\alpha\beta_i\beta_j$, $\alpha\beta_i\gamma_k$,
 and $\alpha\beta_j\gamma_l$ for some $i,j,k,l$ with $i\ne j$.}\]
If $r_1^{-1}(\alpha)$ is separated into two parts, say $r^{-1}(\alpha_1)$ and $r^{-1}(\alpha_2)$,
then we have a contradiction because of the following non-isometric elements:
\[\mbox{$\alpha_1\beta_i\beta_j$, $\alpha_2\beta_i\beta_j$ for some $i,j$}\]
\[\mbox{and $|A_3(r_Y)\cup A_3(r_Z)|\geq 2$ since $\max\{|Y|,|Z|\}\geq 5$}.\]

It remains to eliminate the case where $(X,r_1)$ induces
a partition given in Theorem~\ref{thm:1}(i),
and we shall prove that $(X,r)$ induces a partition given in Theorem~\ref{thm:1}(i),(ii).
For convenience we assume that
\[\mathsf{Im}(r_1)=\{\alpha,\gamma_i\mid i=1,2,\ldots, a_2(r)-2\},\]
\[\mbox{$\bigcup_{i=1}^{a_2(r)-2}r^{-1}(\gamma_i)$ is a matching, and}\]
\[\mbox{$r_1^{-1}(\alpha)$ is the remaining edges.}\]
If $r_1^{-1}(\gamma_i)$ is separated, then $(X,r)$ induces
a partition given in Theorem~\ref{thm:1}(i).
Suppose $r_1^{-1}(\alpha)$ is separated into two parts, say $r^{-1}(\alpha_1)$ and $r^{-1}(\alpha_2)$.
If  $\alpha_i\notin M_2(r)$, then
\[\left(\bigcup_{i=1}^{a_2(r)-2}r^{-1}(\gamma_i)\right)\cup r^{-1}(\alpha_i)\]
 is a matching,
otherwise,  the following non-isometric elements would induce a contradiction:
\[\mbox{$\alpha_i\alpha_j\alpha_j$, $\alpha_j\alpha_j\alpha_j$, $\alpha_j\alpha_j\gamma_k$
 and $\alpha_i\alpha_j\gamma_k$ for some $j,k$ with $i\ne j$.}\]
Thus, we may assume that $\alpha_i\in M_2(r)$ for $i=1,2$.

Since $a_2(r)-2\geq 3$ and $a_2(r)=a_3(r)$, there exists $k$ such that $\gamma_k$ appears only once in $A_3(r)$.

We claim that
$\gamma_k\alpha_i\alpha_i\notin A_3(r)$ for $i=1,2$.
Let $\{x,y\}\in E_{\gamma_k}$.
If $\gamma_k\alpha_1\alpha_1\in A_3(r)$, then all edges from $\{x,y\}$ to others are beleonged to $E_{\alpha_1}$,
which implies
\[\mbox{$\gamma_i\alpha_1\alpha_1, \alpha_1\alpha_1\alpha_2\in A_3(r)$ for each $i=1,2,\ldots, a_2(r)-2$}.\]
Since $9\leq n$, we can take $V\in \binom{X}{4}$ with $V\cap \{x,y\}=\emptyset$ such that
$A_2(r_V)\subseteq \{\alpha_1,\alpha_2\}$.
This implies that $\alpha_i\alpha_i\alpha_i\in A_3(r)$ for some $i=1,2$.
Since $\alpha_2\in M_2(r)$ and
\[|\{\gamma_j\alpha_1\alpha_1, \alpha_1\alpha_1\alpha_2, \alpha_i\alpha_i\alpha_i\mid j=1,2,\ldots, a_2(r)-2\}|=a_2(r),\]
it follows that $i=2$. This implies that  $\{\alpha_2, \gamma_j\mid j=1,2,\ldots\}$ is closed, and
$(X,E_{\alpha_2}\cup \bigcup_iE_{\gamma_i})$ is a disjoint union of at least three cliques, 
so that $\alpha_1\alpha_1\alpha_1\in A_3(r)$, a contradiction to $a_2(r)=a_3(r)$.

By the claim,
\begin{equation}\label{eq:b2}
\mbox{$\gamma_k\alpha_1\alpha_2\in A_3(r)$.}
\end{equation}
Since $\alpha_1,\alpha_2\in M_2(r)$, $\alpha_i\alpha_i\beta_i\in A_3(r)$ 
for some $\beta_i\in A_2(r)$.
Since these elements are non-isometric in $(X,r)$, it follows from $a_2(r)=a_3(r)$ that
each of $\gamma_i$s appears only once in $A_3(r)$. Therefore, we conclude from (\ref{eq:b2}) that
\begin{equation}\label{eq:b3}
\mbox{$\gamma_j\alpha_1\alpha_2\in A_3(r)$ for each $j=1,2,\ldots, a_2(r)-2$.}
\end{equation}
Notice that
\[|\{\alpha_i\alpha_i\beta_i,\gamma_j\alpha_1\alpha_2,\mid i=1,2, j=1,2,\ldots, a_2(r)-2\}|=a_2(r).\]
It follows from (\ref{eq:b3}) that
\[\beta_1,\beta_2\in \{\alpha_1,\alpha_2\}.\]
Since $9\leq n$, it follows from (\ref{eq:b3}) that
there exists $W\in \binom{R_{\alpha_i}(x)}{3}$ for some $i=1,2$
such that $A_2(r_W)\subseteq \{\alpha_1,\alpha_2\}$.
This implies that
\[\mbox{$\alpha_i\alpha_i\alpha_i\in A_3(r)$ for some $i=1,2$.}\]
Notice that $\{\alpha_i\}$ is closed, so that
$(X,E_{\alpha_i})$ is a disjoint union of complete graphs by Lemma~\ref{lem:closed},
and it has exactly two connected components, otherwise,  the following non-isometric elements would induce a contradiction:
\[\mbox{$\alpha_j\alpha_j\alpha_i$, $\alpha_j\alpha_j\alpha_j$, $\alpha_i\alpha_i\alpha_i$ where $i\ne j$.}\]
Therefore, $(X,r)$ is a partition given in Theorem~\ref{thm:1}(ii).
\end{proof}

\section*{Acknowledgement}
This work was supported by a 2-Year Research Grant of Pusan National University.


\end{document}